\newcommand\nnode[1]{{\kern -0.6pt\mathop\bigcirc\limits_{\rlap{#1}}\kern -1pt}}
\newcommand\edge{{\vrule width20pt height3pt depth-2pt}}
\newcommand\halfedge{{\vrule width8pt height3pt depth-2pt}}
\newcommand\vertbar[2]{\rlap{\kern4pt\vrule width1pt height17.3pt depth-7.3pt}
 \rlap{\raise19.4pt\hbox{$\kern -0.4pt\bigcirc#1$}}{#2}}
\newtheorem{theorem}{Theorem}[section]
\newtheorem{lemma}[theorem]{Lemma}
\newtheorem{proposition}[theorem]{Proposition}
\newtheorem{corollary}[theorem]{Corollary}
\theoremstyle{definition}
\newtheorem{rem}[theorem]{Remark}
\newtheorem{rems}[theorem]{Remarks}
\newtheorem{Qu}[theorem]{Question}
\newcommand\BZ{{\mathbb Z}}
\newcommand\BF{{\mathbb F}}
\newcommand\FS{{\mathfrak S}}
\newcommand\bw{{\mathbf w}}
\newcommand\Torelli{{\mathcal S}{\mathcal I}}
\newcommand\Burau{{\mathcal B}{\mathcal I}}
\newcommand\Id{\mathrm{Id}}
\newcommand\Ker{\mathrm{Ker}}
\newcommand\card{\mathrm{card}}
\newcommand\Sp{\operatorname{Sp}}
\newcommand\St{\operatorname{St}}
\newcommand\LHS{\operatorname{LHS}}
\newcommand\RHS{\operatorname{RHS}}
\newcommand\inv{^{-1}}
\numberwithin{equation}{section}
\title[Braid groups and symplectic Steinberg groups]{Braid groups and\\ symplectic Steinberg groups}
\author{Fran\c cois Digne}
\address{Fran\c cois Digne: 
Laboratoire Ami\'enois de Math\'ematique Fondamentale et Appliqu\'ee,
Universit\'e de Picardie Jules-Verne \& CNRS,
UFR Sciences, 33 Rue Saint-Leu, 80039 Amiens Cedex 01, France}
\email{digne@u-picardie.fr}
\urladdr{www.lamfa.u-picardie.fr/digne/}
\author{Christian Kassel}
\address{Christian Kassel: 
Institut de Recherche Math\'e\-ma\-tique Avanc\'ee,
Universit\'e de Strasbourg \& CNRS,
7 rue Ren\'{e} Descartes, 67084 Strasbourg Cedex, France}
\email{kassel@math.unistra.fr}
\urladdr{irma.math.unistra.fr/\raise-2pt\hbox{\~{}}kassel/}
\keywords{Braid group, Artin group, Steinberg group, symplectic modular group, group presentation}
\subjclass[2010]{(Primary)
19C09, 
20G30, 
20F36; 
(Secondary)
11E57, 
20F05, 
22E40
}
\begin{document}

\begin{abstract}
We construct a homomorphism~$f$ from the braid group $B_{2n+2}$ on $2n+2$ strands
to the Steinberg group~$\St(C_n,\BZ)$ associated with the Lie type~$C_n$ and with integer coefficients. 
This homomorphism lifts the well-known symplectic representation (aka the integral Burau representation)
of the braid group. We also describe the image and the kernel of~$f$.
\end{abstract}

\maketitle

\section{Introduction}\label{sec-intro}

In this article we provide a connection between low-dimensional topology and algebraic $K$-theory. 
More precisely, let $B_{2n+2}$ be the braid group on $2n+2$ strands ($n\geq 2$). 
In~\cite{Ka1}, following work by Arnold, Magnus \& Peluso, Birman, A'Campo \emph{et al}.\
(see \cite{Ac, Ar, Bi1, Bi2, MP}), 
the second-named author investigated an action of~$B_{2n+2}$ on the free group~$F_{2n}$ on $2n$ generators
obtained by viewing a twice-punctured surface of genus~$n$ as a double covering of the disk
via an hyperelliptic involution. 
Line\-ar\-izing this action, one obtains a homomorphism $\bar{f} : B_{2n+2} \to \Sp_{2n}(\BZ)$
from the braid group to the symplectic modular group~$\Sp_{2n}(\BZ)$.
This symplectic representation was shown by Gambaudo and Ghys 
to be the Burau representation specialized at $t= -1$ (see \cite[Prop.~2.1]{GG}).

Now $\Sp_{2n}(\BZ)$ is a Chevalley group of Lie type~$C_n$. By~\cite{StM,St0,St} it has a natural extension, 
its Steinberg group~$\St(C_n,\BZ)$, 
which is defined by means of a presentation by generators and relations.
(Steinberg groups are basic ingredients in algebraic $K$-theory; see for instance~\cite{Mi}.)

Our main observation is that the symplectic representation~$\bar{f}$ (aka the integral Burau representation mentioned above)
can be lifted to a homomorphism 
\[
f: B_{2n+2} \to \St(C_n,\BZ)
\]
from the braid group to the Steinberg group. 

When $n\geq 3$ the lifting~$f$ is not surjective; we shall describe its image. 
We also show how to extend~$f$ to a surjective homomorphism with a bigger Artin group as domain.

We further describe the kernel of~$f$.
As an application we obtain a simple braid-like presentation of the image of~$f$ (resp.\ of the image of~$\bar{f}$),
which is a subgroup of finite index of~$\St(C_n,\BZ)$ (resp.\ of~$\Sp_{2n}(\BZ)$).

The paper is a continuation of~\cite{Ka2}, which dealt with the case $n=2$.
It is organized as follows. 
In Section~\ref{sec-Steinberg} we give a presentation of the Steinberg group~$\St(C_n,\BZ)$ 
and list a few properties of the special elements~$w_{\gamma}$. 
In Section~\ref{sec-BSt} we construct the lifting~$f$ from the braid group to the Steinberg group.
In Section~\ref{sec-image} we determine its image (see Theorem~\ref{thm-image}).
Section~\ref{sec-kern} is devoted to a description of the kernel of~$f$: 
we highlight a braid $\alpha_n \in B_{2n+2}$ which together with two other braids generate the kernel of~$f$ as a normal subgroup
(see Theorem~\ref{thm-kernel}).
In Section~\ref{sec-epi} 
we extend~$f$ to an epimorphism $\widehat{f} :  \widehat{B}_{2n+2} \to \St(C_n,\BZ)$,
where $\widehat{B}_{2n+2}$ is an Artin group (see Theorem~\ref{thm-St-newgen})
slightly bigger than the braid group~$B_{2n+2}$.

\section{The Steinberg group~$\St(C_n,\BZ)$}\label{sec-Steinberg}

With any irreducible root system~$\Phi$ Steinberg~\cite{St0,St} associated the so-called Steinberg group,
which is an extension of the simple complex algebraic group of type~$\Phi$. 
Later Stein~\cite{StM} extended Steinberg's construction over any commutative ring~$R$, 
thus leading to the Steinberg group~$\St(\Phi,R)$.
We are interested in the case when the root system~$\Phi$ is of type~$C_n$ ($n\geq 2$) and $R = \BZ$ is the ring
of integers. The corresponding Steinberg group~$\St(C_n,\BZ)$ is an extension of the symplectic modular group~$\Sp_{2n}(\BZ)$.

\subsection{The symplectic modular group $\Sp_{2n}(\BZ)$}\label{ssec-Sp}

Let $n$ be an integer $\geq 2$.
Recall that $\Sp_{2n}(\BZ)$ is the group of $2n\times 2n$ matrices $M$ with integral entries satisfying 
the relation $M^TJ_{2n}M=J_{2n}$, where $M^T$ is the transpose of~$M$,
\[
J_{2n} =
\begin{pmatrix}
0&\Id_n\cr
-\Id_n&0
\end{pmatrix}
\]
and $\Id_n$ is the identity matrix of size~$n$.

Let $\{\varepsilon_1, \ldots, \varepsilon_n\}$ be the canonical basis of the free abelian group~$\BZ^n$.
In Bourbaki's notation the root system of type $C_n$ consists of the following elements (see~\cite[Chap.~VI, \S~4.6]{Bo}):
the elements $\pm\varepsilon_i\pm\varepsilon_j$ ($1\leq i,j\leq n$, $i\neq j$) are the \emph{short roots} and 
$\pm2\varepsilon_i$ ($1\leq i\leq n$) are the \emph{long roots}.

Denote by $E_{i,j}$ the $2n\times 2n$ matrix which has all
entries equal to~$0$ except the $(i,j)$-entry which is equal to~$1$. 
The group $\Sp_{2n}(\BZ)$ is generated by the following matrices (see \cite{Br}):
\begin{itemize}
\item $X_{i,j}=\Id_{2n}+E_{i,j}-E_{j+n,i+n}$ for $1\leq i,j\leq n$, $i\neq j$,
\item $Y_{i,j}=\Id_{2n}+E_{i,j+n}+E_{j,i+n}$ for $1\leq i, j\leq n$, $i\neq j$,
\item $Y'_{i,j}=Y_{i,j}^T$ for $1\leq i, j\leq n$, $i\neq j$,
\item $Z_i=\Id_{2n}+E_{i,i+n}$ for $1\leq i\leq n$,
\item $Z'_i=Z_i^T$ for $1\leq i\leq n$.
\end{itemize}
Note that  $Y_{i,j}=Y_{j,i}$ and $Y'_{i,j}=Y'_{j,i}$.

Each of these matrices generates a root subgroup corresponding to a root in the following way: 
$X_{i,j}$ corresponds to the root $\varepsilon_i-\varepsilon_j$,
$Y_{i,j}$ to the root $\varepsilon_i+\varepsilon_j$, 
$Y'_{i,j}$ to $-\varepsilon_i-\varepsilon_j$,
$Z_i$ to~$2\varepsilon_i$, and $Z'_i$ to~$-2\varepsilon_i$.

We now list the commutation relations between pairs of these matrices corresponding to non-opposite roots. 
In the following relations, the indices $i$, $j$ and $k$  are pairwise
distinct and run over $\{1,\ldots,n\}$:
\begin{equation*}
[X_{i,j},X_{j,k}] = X_{i,k} \, , \quad
[X_{i,j},Y_{j,k}] = Y_{i,k} \, , \quad
\end{equation*}
\begin{equation*}
[X_{i,j},Y'_{i,k}] = Y_{j,k}^{\prime-1} \, , \quad
[Y_{i,j},Y'_{j,k}] = X_{i,k} \, ,
\end{equation*}
\begin{equation*}
[X_{i,j},Y_{i,j}] = Z_i^2 \, , \quad
[X_{i,j},Y'_{i,j}] = Z_j^{\prime -2} \, , 
\end{equation*}
\begin{equation*}
[X_{i,j},Z_j] = Z_iY_{i,j} = Y_{i,j}Z_i \, , \quad
[X_{i,j},Z'_i] = Z'_jY_{i,j}^{\prime-1} = Y_{i,j}^{\prime-1}Z'_j \, ,
\end{equation*}
\begin{equation*}
[Y_{i,j},Z'_i] = X_{j,i}Z_j^{-1} = Z_j^{-1} X_{j,i} \, , \quad
[Y'_{i,j},Z_i] = X_{i,j}^{-1} Z_j^{\prime -1} = Z_j^{\prime -1} X_{i,j}^{-1} \, .
\end{equation*}
The matrices commute for all other pairs of generators, except for
$(X_{i,j}, X_{j,i})$, $(Y_{i,j}, Y'_{i,j})$ and $(Z_i, Z'_i)$, which are pairs corresponding to opposite roots.

\subsection{A presentation of the Steinberg group}\label{ssec-St}

By~\cite[Sect.\,3]{Be2} and~\cite{StM} the Steinberg group $\St(C_n,\BZ)$ has a presentation 
with the same generators and relations as above, namely with generators 
$x_{i,j}$, $y_{i,j}$, $y'_{i,j}$ ($1\leq i,j\leq n$ and $i\neq j$),
$z_i$, $z'_i$ ($1\leq i\leq n$)
subject to the following relations (where $i, j, k \in \{1,\ldots,n\}$ are pairwise distinct):
\begin{equation}
y_{i,j} = y_{j,i}\, , \qquad y'_{i,j} = y'_{j,i} \, ,
\end{equation}
\begin{equation}\label{eq-xxx}
[x_{i,j},x_{j,k}] = x_{i,k}\, ,
\end{equation}
\begin{equation}\label{eq-xyy}
[x_{i,j},y_{j,k}] = y_{i,k} \, ,
\end{equation}
\begin{equation}\label{eq-xyprime}
[x_{i,j},y'_{i,k}] = y_{j,k}^{\prime-1}\, ,
\end{equation}
\begin{equation}\label{eq-yyprime}
[y_{i,j},y'_{j,k}] = x_{i,k}\, ,
\end{equation}
\begin{equation}\label{eq-xyz}
[x_{i,j},y_{i,j}] = z_i^2 \, ,
\end{equation}
\begin{equation}
[x_{i,j},y'_{i,j}] = z_j^{\prime -2} \, ,
\end{equation}
\begin{equation}\label{eq-xz}
[x_{i,j},z_j] = z_i y_{i,j} = y_{i,j} z_i \, ,
\end{equation}
\begin{equation}\label{eq-St8}
[x_{i,j},z'_i] = z'_j y_{i,j}^{\prime-1} = y_{i,j}^{\prime-1} z'_j \, ,
\end{equation}
\begin{equation}\label{eq-St9}
[y_{i,j},z'_i] = x_{j,i} z_j^{-1} = z_j^{-1} x_{j,i} \, ,
\end{equation}
\begin{equation}\label{eq-St10}
[y'_{i,j},z_i] = x_{i,j}^{-1} z_j^{\prime -1} = z_j^{\prime -1} x_{i,j}^{-1} \, ,
\end{equation}
and all remaining pairs of generators commuting, except the pairs
$(x_{i,j}, x_{j,i})$, $(y_{i,j}, y'_{i,j})$ and $(z_i, z'_i)$ for which we do not prescribe any relation.

Note that in view of \eqref{eq-St8} and \eqref{eq-St9} the generators $x_{i,j}$ and $y'_{i,j}$ can be expressed 
in terms of the other generators.

By construction there is a surjective homomorphism 
\[
\pi : \St(C_n,\BZ) \to \Sp_{2n}(\BZ)
\]
sending each generator of $\St(C_n,\BZ)$ represented by a lower-case letter to the symplectic matrix
represented by the corresponding upper-case letter.

By~\cite[Th.\,6.3]{Ma} and~\cite[Kor.\,3.2]{Be2} the kernel of the epimorphism $\pi : \St(C_n,\BZ) \to \Sp_{2n}(\BZ)$
is infinite cyclic generated by $(x_{2\varepsilon_i} \, x_{-2\varepsilon_i}^{-1} \, x_{2\varepsilon_i})^4$,
where $x_{2\varepsilon_i}$ (resp.\ $x_{-2\varepsilon_i}$) is the generator corresponding to the long root~$2\varepsilon_i$
(resp.\ to~$-2\varepsilon_i$); this generator is independent of~$i$ and central (see Lemma~\ref{lem-w4} below;
see also~\cite{La}).

\subsection{The elements $w_{\gamma}$}\label{ssec-w}

For a root $\gamma$ let $x_{\gamma}$ be the generator of the Steinberg group corresponding to~$\gamma$.
Set 
\begin{equation}\label{def-w}
w_{\gamma} = x_{\gamma} \, x_{-\gamma}^{-1} \, x_{\gamma} \in \St(C_n,\BZ) \, .
\end{equation}
In particular, we have
\begin{equation}\label{def-wz}
w_{2\varepsilon_i} = z_i \, z_i^{\prime -1} \, z_i 
\quad\text{and}\quad
w_{-2\varepsilon_i} =  z'_i \, z_i^{-1} \, z'_i \, .
\end{equation}
For simplicity we write $w_i$ for~$w_{2\varepsilon_i}$. 
Since $z_i$ commutes with $z_j$ and with $z'_j$ when $i\neq j$,
we have $w_i w_j = w_j w_i$ for all $(i,j )\in \{1, \ldots, n\}^2$.

The following equality holds for all roots~$\gamma$:
\begin{equation}\label{eq-ww}
w_{\gamma} = w_{-\gamma}^{-1}
\end{equation}
(for a proof, see~\cite[Lemma\,2.2]{Ka2}).
As a consequence, we have
\begin{equation}\label{eq-wxw1}
w_{\gamma} x_{\gamma} w_{\gamma}^{-1}
= w_{-\gamma}^{-1} x_{\gamma} w_{\gamma}^{-1}
= x_{-\gamma}^{-1} \, x_{\gamma} \, x_{-\gamma}^{-1} x_{\gamma} x_{\gamma}^{-1} \, x_{-\gamma} \, x_{\gamma} ^{-1}
= x_{-\gamma}^{-1} .
\end{equation}
Similarly, 
\begin{equation}\label{eq-wxw2}
w_{\gamma} x_{-\gamma} w_{\gamma}^{-1} = x_{\gamma}^{-1} .
\end{equation}
It follows from \eqref{eq-wxw1} and \eqref{eq-wxw2} that the square~$w_{\gamma}^2$ commutes with~$x_{\gamma}$
and with~$x_{-\gamma}$ for all roots~$\gamma$.
 
We also need the subsequent relation between an element~$w_{\gamma}$ and the generator~$x_{\delta}$
associated with a root~$\delta$ such that $\gamma + \delta \neq 0$, namely
\begin{equation}\label{wxw}
w_{\gamma} \, x_{\delta} \, w_{\gamma}^{-1} = x_{\delta'}^{c} \, ,
\end{equation}
where $\delta'$ is the image of~$\delta$ under the reflection~$s_{\gamma}$
in the hyperplane orthogonal to~$\gamma$ and $c = \pm 1$ 
(see Relation\,(R7) in~\cite[Chap.\,3, p.\,23]{St}). 
Recall that~$s_{\gamma}$ is given by
\begin{equation*}
s_{\gamma}(\delta) = \delta - 2 \frac{(\gamma,\delta)}{(\gamma,\gamma)} \gamma\, ,
\end{equation*}
where $( -, -)$ is the inner product of the Euclidean vector space of which 
the set $\{\varepsilon_1, \ldots, \varepsilon_n\}$ forms an orthonormal basis.
To determine the sign~$c$ (and the root~$\delta'$) in~\eqref{wxw} it is enough to  compute the image 
$\pi(w_{\gamma} \, x_{\delta} \, w_{\gamma}^{-1})$ in~$\Sp_{2n}(\BZ)$.

In particular, for any long root~$2\varepsilon_i$ the element $w_i = w_{2\varepsilon_i}$ commutes with all
generators $x_{k,\ell}$, $y_{k,\ell}$ and $y'_{k,\ell}$ such that $k\neq i \neq \ell$.
By contrast we have the non-trivial relations ($i\neq j$)
\begin{equation}\label{eq-wyw}
w_i \, y_{i,j} \, w_i^{-1} = x_{j,i} \, ,
\qquad 
w_i \, x_{j,i} \, w_i^{-1} = y_{i,j}^{-1} \, ,
\end{equation}
and
\begin{equation}\label{eq-wyw2}
w_i \, y'_{i,j} \, w_i^{-1} = x_{i,j} \, ,
\qquad 
w_i \, x_{i,j} \, w_i^{-1} = y_{i,j}^{\prime -1} 
\end{equation}
in the Steinberg group~$\St(C_n,\BZ)$.
Hence, the conjugation by the square~$w_i^2$ turns each generator $x_{i,j}$, $x_{j,i}$, $y_{i,j}$, $y'_{i,j}$ into its inverse,
namely
\begin{equation}\label{eq-wwyww}
w_i^2 \, y_{i,j} \, w_i^{-2} = y_{i,j}^{-1} \, ,
\qquad 
w_i^2 \, x_{j,i} \, w_i^{-2} = x_{j,i}^{-1} \, ,
\end{equation}
and
\begin{equation}\label{eq-wwyww2}
w_i^2 \, y'_{i,j} \, w_i^{-2} = y_{i,j}^{\prime -1} \, ,
\qquad 
w_i^2 \, x_{i,j} \, w_i^{-2} = x_{i,j}^{-1} \, .
\end{equation}

The following lemma will be used in the sequel.

\begin{lemma}\label{lem-w4}
For each $i = 1, \ldots, n$
the element~$w_i^4$ is central in~$ \St(C_n,\BZ)$ and we have $w_i^4 = w_1^4$.
\end{lemma}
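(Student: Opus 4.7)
The plan is to verify centrality of $w_i^4$ first, and then to identify $w_i^4$ with $w_1^4$ via a Weyl-type conjugation.

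For centrality it suffices to check that $w_i^4$ commutes with every generator of $\St(C_n,\BZ)$. I would split the generators into three families. First, if $\xi$ is any of $x_{k,\ell},y_{k,\ell},y'_{k,\ell}$ with $k,\ell$ both different from $i$, or any of $z_j,z'_j$ with $j\neq i$, then $w_i$ itself already commutes with $\xi$ (by the remark after~\eqref{wxw}, together with the commutations $z_iz_j=z_jz_i$ and $z_iz'_j=z'_jz_i$ recalled in Section~\ref{ssec-Sp}), so a fortiori does $w_i^4$. Second, for $\xi\in\{x_{i,j},x_{j,i},y_{i,j},y'_{i,j}\}$ with $j\neq i$, relations~\eqref{eq-wwyww} and \eqref{eq-wwyww2} give $w_i^2\xi w_i^{-2}=\xi^{-1}$; squaring this conjugation yields $w_i^4\xi w_i^{-4}=\xi$. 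Third, for $\xi\in\{z_i,z'_i\}=\{x_{\pm 2\varepsilon_i}\}$, the paragraph following~\eqref{eq-wxw2} already records that $w_i^2$ commutes with~$\xi$. Together these cover all generators, so $w_i^4$ is central.

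For $w_i^4=w_1^4$ (with $i\geq 2$), I would exploit the short root $\gamma=\varepsilon_1-\varepsilon_i$; its reflection satisfies $s_\gamma(\pm 2\varepsilon_1)=\pm 2\varepsilon_i$, so~\eqref{wxw} produces signs $c,c'\in\{\pm 1\}$ with
\[
w_\gamma\, z_1\, w_\gamma^{-1}=z_i^{c}\qquad\text{and}\qquad w_\gamma\, z'_1\, w_\gamma^{-1}=z_i^{\prime c'}.
\]
Following the recipe indicated after~\eqref{wxw}, I would pin down $c$ and $c'$ by computing the conjugations $\pi(w_\gamma)\pi(z_1)\pi(w_\gamma)^{-1}$ and $\pi(w_\gamma)\pi(z'_1)\pi(w_\gamma)^{-1}$ inside $\Sp_{2n}(\BZ)$; the resulting matrix computation yields $c=c'=1$. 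Substituting into $w_1=z_1z_1^{\prime-1}z_1$ gives $w_\gamma w_1 w_\gamma^{-1}=z_i z_i^{\prime -1} z_i=w_i$, and raising to the fourth power $w_\gamma w_1^4 w_\gamma^{-1}=w_i^4$. The left-hand side equals $w_1^4$ by the centrality established in the previous paragraph, so $w_1^4=w_i^4$ as required.

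The only real obstacle is the sign determination in the second step; fortunately, the signs $c,c'$ are unambiguously detected by the image in $\Sp_{2n}(\BZ)$ (the matrices $X_{\delta'}$ and $X_{\delta'}^{-1}$ are distinct), so this reduces to a single short matrix calculation, and every other ingredient is already collected in Section~\ref{ssec-w}.
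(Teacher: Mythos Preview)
Your argument is correct and follows essentially the same route as the paper: centrality is obtained from \eqref{eq-wxw1}--\eqref{eq-wwyww2} exactly as you describe, and the equality $w_i^4=w_1^4$ comes from conjugating by $w_{\varepsilon_1-\varepsilon_i}$ via the special case of~\eqref{wxw} (the paper simply records the resulting relations $w_{\varepsilon_i-\varepsilon_j} z_i w_{\varepsilon_i-\varepsilon_j}^{-1}=z_j$ and $w_{\varepsilon_i-\varepsilon_j} z'_i w_{\varepsilon_i-\varepsilon_j}^{-1}=z'_j$ without dwelling on the sign check you spell out).
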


\begin{proof}
The centrality of~$w_i^4$ follows from \eqref{eq-wxw1}, \eqref{eq-wxw2}, \eqref{eq-wwyww} and \eqref{eq-wwyww2}.
Now all elements~$w_i$ are conjugate as a consequence of the following special cases of~\eqref{wxw}, where $i\neq j$:
\begin{equation*}
w_{\varepsilon_i-\varepsilon_j} \, z_i\, w_{\varepsilon_i-\varepsilon_j}^{-1} = z_j 
\quad\text{and}\quad
w_{\varepsilon_i-\varepsilon_j} \, z'_i\, w_{\varepsilon_i-\varepsilon_j}^{-1} = z'_j \, .
\end{equation*}
The conclusion follows.
\end{proof}

\section{From the braid group to the Steinberg group}\label{sec-BSt}

Let $B_{2n+2}$ be the braid group on $2n+2$ strands, where $n \geq 2$ is a fixed integer. 
It has a standard presentation with $2n+1$ generators 
$\sigma_1, \sigma_2,\ldots, \sigma_{2n+1}$
and the following relations ($1 \leq i,j \leq 2n+1$):
\begin{equation}\label{braid2}
\sigma_i \sigma_j \sigma_i  = \sigma_j \sigma_i \sigma_j \quad \text{if} \; |i-j| = 1 ,
\end{equation}
and
\begin{equation}\label{braid1}
\sigma_i \sigma_j  = \sigma_j  \sigma_i  \qquad\;\; \text{otherwise}.
\end{equation}

Let us now construct a homomorphism from~$B_{2n+2}$ to the symplectic Steinberg group~$\St(C_n,\BZ)$.

\begin{theorem}\label{thm-BtoSt}
There exists a homomorphism 
$f: B_{2n+2} \to \St(C_n,\BZ)$ such that 
\begin{equation*}
f(\sigma_1) = z_1 \, , \qquad 
f(\sigma_{2n+1}) = z_n \, ,
\end{equation*}
\begin{equation*}
f(\sigma_{2i}) = z_i^{\prime-1} \qquad \text{for} \;\, i=1,\ldots,n,
\end{equation*}
\begin{equation*}
f(\sigma_{2i+1}) = z_i z_{i+1} y_{i,i+1}^{-1} \qquad \text{for} \;\, i=1,\ldots,n-1.
\end{equation*}
The homomorphism~$f$ is surjective if and only if $n=2$.
\end{theorem}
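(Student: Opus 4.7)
The plan for Theorem~\ref{thm-BtoSt} is as follows. Uniqueness is immediate from the fact that $\sigma_1, \ldots, \sigma_{2n+1}$ generate $B_{2n+2}$, so existence amounts to checking that the prescribed images satisfy the defining braid relations \eqref{braid2}--\eqref{braid1}, after which $f$ is well defined by the universal property of the braid presentation.

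For the commutation relations \eqref{braid1}, I would argue letter-by-letter. Among the generators of $\St(C_n,\BZ)$ that appear in the images $f(\sigma_i)$---namely $z_k$, $z_k'$, and $y_{k,k+1}$---the only non-commuting pairs are $(z_k, z_k')$ and $(z_k', y_{k,j})$ with a common index~$k$. A parity-based case split on $i$ and $j$ shows that, whenever $|i-j|\geq 2$, the index sets attached to $f(\sigma_i)$ and $f(\sigma_j)$ never produce such a clashing pair, so $f(\sigma_i)$ and $f(\sigma_j)$ commute.

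For the braid-three relations \eqref{braid2}, the two boundary cases $(\sigma_1,\sigma_2)$ and $(\sigma_{2n},\sigma_{2n+1})$ reduce to $z_iz_i^{\prime-1}z_i = z_i^{\prime-1}z_iz_i^{\prime-1}$ (with $i=1$ or $i=n$), which is precisely the identity $w_{2\varepsilon_i}=w_{-2\varepsilon_i}^{-1}$ and thus holds by \eqref{eq-ww}. The remaining pairs, $(\sigma_{2i-1},\sigma_{2i})$ for $2\leq i\leq n$ and $(\sigma_{2i},\sigma_{2i+1})$ for $1\leq i\leq n-1$, constitute the main obstacle because of the more complicated word $z_iz_{i+1}y_{i,i+1}^{-1}$. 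A clean way around the bookkeeping is to notice that every generator appearing on either side of such a relation is labelled by an index in $\{i,i+1\}$, so the identity lives inside the Steinberg subgroup attached to the $C_2$ sub-root system $\{2\varepsilon_i,\, \varepsilon_i-\varepsilon_{i+1},\, 2\varepsilon_{i+1}\}$; this reduces the check to the $n=2$ case already established in~\cite{Ka2}.

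Finally, for the surjectivity assertion, when $n=2$ the image of $f$ contains $z_1$, $z_2$, $z_1'$, $z_2'$, and $z_1z_2y_{1,2}^{-1}$. Since $z_1$ and $z_2$ commute with each other and with $y_{1,2}$, multiplication by $(z_1z_2)^{-1}$ yields $y_{1,2}^{-1}$, so $y_{1,2}$ also lies in the image. The conjugation formulas \eqref{eq-wyw}--\eqref{eq-wyw2} applied to $w_1$ and $w_2$ then produce $x_{1,2}$, $x_{2,1}$, and $y_{1,2}'$, so every generator of $\St(C_2,\BZ)$ lies in $\Im f$, and $f$ is surjective. For $n\geq 3$, the image of $f$ will be shown in Theorem~\ref{thm-image} to be a proper subgroup of $\St(C_n,\BZ)$, completing the proof of the ``only if'' direction.
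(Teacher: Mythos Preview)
Your argument is essentially correct, and for the existence of~$f$ it follows the same outline as the paper: the commutation relations are dispatched by bookkeeping on indices, and the two boundary braid-three relations reduce to $w_{2\varepsilon_i}=w_{-2\varepsilon_i}^{-1}$. The paper handles the interior braid-three relations differently: it carries out one explicit computation (the pair $(\sigma_{2i},\sigma_{2i+1})$, using \eqref{eq-St9}, \eqref{eq-ww}, \eqref{eq-wyw}) and then invokes the automorphism $\sigma_i\mapsto\sigma_{2n+2-i}$ of~$B_{2n+2}$ to reduce the pairs $(\sigma_{2i-1},\sigma_{2i})$ to the case already done. Your shortcut---observing that every such relation lives in a rank-two Steinberg subgroup and is therefore an instance of the $n=2$ theorem from~\cite{Ka2}---is valid and arguably cleaner, since the obvious map $\St(C_2,\BZ)\to\St(C_n,\BZ)$ on the sub-root system $\{i,i+1\}$ carries the $B_6$ relations forward. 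Your direct surjectivity argument for $n=2$ is also fine (the paper simply cites~\cite{Ka2}).

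One genuine caution: your ``only if'' direction defers to Theorem~\ref{thm-image}, but as written the proof of Theorem~\ref{thm-image} explicitly invokes ``Part~(v) of the proof of Theorem~\ref{thm-BtoSt}'' for the key fact that the composite $B_{2n+2}\to\Sp_{2n}(\BF_2)$ factors through~$\FS_{2n+2}$. If you remove that argument from Theorem~\ref{thm-BtoSt} and simply point forward, the two proofs reference each other circularly. The paper instead gives a self-contained non-surjectivity argument here: the images of the~$\sigma_i$ in~$\Sp_{2n}(\BF_2)$ have order~$2$, so the map factors through~$\FS_{2n+2}$; surjectivity of~$f$ would force an isomorphism $\FS_{2n+2}\cong\Sp_{2n}(\BF_2)$, impossible for $n\geq 3$ since $\Sp_{2n}(\BF_2)$ is simple while $\FS_{2n+2}$ is not. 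Either supply this short argument here, or rewrite Theorem~\ref{thm-image} to be self-contained.
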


\begin{rems}\label{rems-f}
(a) By~\cite{Ka1} the homomorphism $\bar{f} : B_{2n+2} \to \Sp_{2n}(\BZ)$ mentioned in the introduction is 
defined on the generators~$\sigma_i$ by
\begin{equation*}
\bar{f}(\sigma_1) = Z_1 \, , \qquad 
\bar{f}(\sigma_{2n+1}) = Z_n \, ,
\end{equation*}
\begin{equation*}
\bar{f}(\sigma_{2i}) = Z_i^{\prime-1} \qquad \text{for} \;\, i=1,\ldots,n,
\end{equation*}
\begin{equation*}
\bar{f}(\sigma_{2i+1}) = Z_i Z_{i+1} Y_{i,i+1}^{-1} \quad \text{for} \;\, i=1,\ldots,n-1,
\end{equation*}
where $Y_{i,i+1}$, $Z_i$ and $Z'_i$ are the symplectic matrices defined in Section\,\ref{ssec-Sp}.
It follows from these formulas and from the definition of~$f$ in Theorem\,\ref{thm-BtoSt}
that the latter is a natural lifting of~$\bar{f}$, i.e.\ we have $\bar{f} = \pi \circ f$,
where $\pi: \St(C_n,\BZ) \to \Sp_{2n}(\BZ)$ is the natural projection.

(b) By definition of~$f$ and of $w_1 = w_{2\varepsilon_1}$ (see \eqref{def-wz}) we have
\begin{equation}\label{def-w1}
w_1 = f(\sigma_1 \sigma_2 \sigma_1) \, .
\end{equation}
It follows from this equality and the remark at the end of Section\,\ref{ssec-St}
that the kernel of $\pi : \St(C_n,\BZ) \to \Sp_{2n}(\BZ)$, 
which is generated by~$w_1^4$, belongs to the image of~$f$.
We thus recover the known fact that the integral Burau representation is not injective.

(c) Note that all three factors in the product $z_i z_{i+1} y_{i,i+1}^{-1}$ expressing $f(\sigma_{2i+1})$ commute.
\end{rems}

\begin{proof}
It suffices to check that the values of $f(\sigma_i)$ ($1 \leq i \leq 2n+1$) 
in the Steinberg group~$\St(C_n,\BZ)$ satisfy the braid relations~\eqref{braid2} and~\eqref{braid1}.

(i) Let us first check the trivial commutation relations~\eqref{braid1}.

\begin{itemize}
\item
\emph{Commutation of~$f(\sigma_1)$ with $f(\sigma_{2n+1})$.}
This follows from the fact that  $z_1$ and $z_n$ commute.

\item
\emph{Commutation of~$f(\sigma_1)$ with $f(\sigma_{2i})$ when $i\geq 2$.}
Indeed, $z_1$ commutes with $z'_i$ when $i \neq 1$.

\item
\emph{Commutation of~$f(\sigma_1)$ with all $f(\sigma_{2i+1})$.}
This is implied by the commuting of~$z_1$ with the other $z_i$ and with the generators $y_{i,j}$.
\end{itemize}

Similarly for the trivial braid relations involving $f(\sigma_{2n+1}) = z_n$.

\begin{itemize}
\item
\emph{Commutation of~$f(\sigma_{2i})$ with $f(\sigma_{2j})$.}
This follows from the fact that the generators $z'_i$ commute with one another.

\item
\emph{Commutation of~$f(\sigma_{2i+1})$ with $f(\sigma_{2j+1})$.} Indeed, the $z_i$'s commute with one another,
as do the $y_{i,j}$'s. Moreover, the $z_i$'s commute with the $y_{i,j}$'s.

\item
\emph{Commutation of~$f(\sigma_{2i})$ with $f(\sigma_{2j+1})$ when $i\notin \{i-1,i\}$}.
This is implied by the facts that $z'_i$ commutes with $y_{j,j+1}$ when $i\neq j$ and that
$z'_i$ commutes with $z_j z_{j+1}$ when $j \notin \{i-1,i\}$.
\end{itemize}

(ii) The relation $f(\sigma_1) f(\sigma_2) f(\sigma_1) = f(\sigma_2) f(\sigma_1) f(\sigma_2)$ reads as
\begin{equation*}
z_1 z_1^{\prime-1} z_1 = z_1^{\prime-1} z_1 z_1^{\prime-1} \, ,
\end{equation*}
which is equivalent to $w_{2\varepsilon_1}  = w_{-2\varepsilon_1}^{-1}$, 
where we use the notation of Section~\ref{ssec-w}. The latter equality holds by~\eqref{eq-ww}.

(iii) The relation $f(\sigma_{2i}) f(\sigma_{2i+1}) f(\sigma_{2i}) = f(\sigma_{2i+1}) f(\sigma_{2i}) f(\sigma_{2i+1})$ 
reads for $1\leq i \leq n-1$ as
\begin{equation*}
z_i^{\prime-1}  z_i z_{i+1} y_{i,i+1}^{-1} z_i^{\prime-1}   = 
z_i z_{i+1} y_{i,i+1}^{-1} z_i^{\prime-1} z_i z_{i+1} y_{i,i+1}^{-1} \, .
\end{equation*}
Let $\LHS$ (resp.\ $\RHS$) be the element of~$\St(C_n,\BZ)$ represented by the left-hand (resp.\ right-hand) side of the 
previous equation.

Since $z_i$ commutes with $y_{i,i+1}$ and with $z_{i+1}$, and the latter with~$z'_i$, we have
\begin{equation*}
\LHS = z_{i+1} z_i^{\prime-1} y_{i,i+1}^{-1} z_i z_i^{\prime-1} \, .
\end{equation*}
By \eqref{def-wz}, \eqref{eq-St9}, \eqref{eq-ww} and the trivial commutation relations we obtain
\begin{eqnarray*}
\LHS & = & z_{i+1} z_i^{\prime-1} y_{i,i+1}^{-1} z'_i w_{-2\varepsilon_i}^{-1} \\
& = & z_{i+1} y_{i,i+1}^{-1} z_i^{\prime-1}  z_{i+1} x_{i+1,i}^{-1} z'_i w_i \\
& = & z_{i+1} y_{i,i+1}^{-1}   z_{i+1} x_{i+1,i}^{-1}  w_i \\
& = & z_{i+1}^2 y_{i,i+1}^{-1}  x_{i+1,i}^{-1}  w_i \, .
\end{eqnarray*}

Let us now deal with $\RHS$. 
Since $z_i$ commutes with $y_{i,i+1}$, and $z_{i+1}$ with $z_i$, $z'_i$ and $y_{i,i+1}$, we have
\begin{eqnarray*}
\RHS &= & z_{i+1}^2   y_{i,i+1}^{-1} z_i z_i^{\prime-1} z_i  y_{i,i+1}^{-1}
= z_{i+1}^2   y_{i,i+1}^{-1} w_i y_{i,i+1}^{-1} \\
&= & z_{i+1}^2   y_{i,i+1}^{-1} x_{i+1,i}^{-1} w_i 
= \LHS.
\end{eqnarray*}
For the third equality we have used~\eqref{eq-wyw}.

(iv) Applying the automorphism $\sigma_i \mapsto \sigma_{2n+2-i}$ of~$B_{2n+2}$, we reduce
the relations 
$f(\sigma_{2i-1}) f(\sigma_{2i}) f(\sigma_{2i-1}) = f(\sigma_{2i}) f(\sigma_{2i-1}) f(\sigma_{2i})$
($2 \leq i \leq n$) and 
$f(\sigma_{2n}) f(\sigma_{2n+1}) f(\sigma_{2n}) = f(\sigma_{2n+1}) f(\sigma_{2n}) f(\sigma_{2n+1})$
to the previous cases.

(v) The surjectivity for $n=2$ was established in~\cite{Ka2}. 
Let us now prove that $f$ is not surjective when $n\geq 3$.
We remark that under the composition $B_{2n+2} \rightarrow \Sp_{2n}(\BZ) \rightarrow \Sp_{2n}(\BF_2)$
of $\bar f$ with the reduction modulo~$2$
the images of the generators $\sigma_i$ of $B_{n+2}$ have order~$2$. Hence
this morphism factors through the symmetric group ${\FS}_{2n+2}$ of all permutations of the set~$\{1, \ldots, 2n+2\}$. 
If $f$ is surjective, then $\bar f$ is surjective too and we obtain a surjective morphism
${\FS}_{2n+2} \rightarrow \Sp_{2n}(\BF_2)$ since the reduction modulo~$2$ is surjective. 
Now for $n\geq 2$ the symmetric group~${\FS}_{2n+2}$ has no non-commutative proper quotient, 
which implies that the map ${\FS}_{2n+2} \rightarrow \Sp_{2n}(\BF_2)$ is an isomorphism. But this is impossible
as $\Sp_{2n}(\BF_2)$ is a simple group when $n \geq 3$ and ${\FS}_{2n+2}$ is not.
(See also \cite[Proof of Statement~B]{Ar}.)
\end{proof}

\section{The image of the homomorphism~$f$}\label{sec-image}

As noted in Theorem~\ref{thm-BtoSt}, the homomorphism $f: B_{2n+2} \to \St(C_n,\BZ)$
is not surjective when $n \geq 3$. We can nevertheless determine its image~$f(B_{2n+2})$ 
inside~$\St(C_n,\BZ)$.

Consider the \emph{level~$2$ congruence subgroup} $\Sp_{2n}(\BZ)[2]$ defined as the kernel of the homomorphism
$\Sp_{2n}(\BZ) \rightarrow \Sp_{2n}(\BF_2)$
induced by reduction modulo~$2$. We lift $\Sp_{2n}(\BZ)[2]$ to the Steinberg group by taking its preimage
\begin{equation}\label{def-St[2]}
\St(C_n,\BZ)[2] = \pi^{-1} \left( \Sp_{2n}(\BZ)[2] \right)
\end{equation}
under the canonical projection $\pi: \St(C_n,\BZ) \to \Sp_{2n}(\BZ)$. 
Thus $\St(C_n,\BZ)[2]$ is the kernel of the composition
\[
\St(C_n,\BZ) \overset{\pi}{\longrightarrow} \Sp_{2n}(\BZ) \longrightarrow \Sp_{2n}(\BF_2).
\]
The group $\Sp_{2n}(\BF_2)$ being finite, $\St(C_n,\BZ)[2]$ is of finite index in~$\St(C_n,\BZ)$.

Since by~\cite[Kor.\,3.2]{Be2} the kernel of the projection $\pi : \St(C_n,\BZ) \to \Sp_{2n}(\BZ)$ 
is the infinite cyclic group $\langle w_1^4 \rangle$ generated by~$w_1^4$, we have the short exact sequence
\begin{equation*}
1 \to \langle w_1^4 \rangle \longrightarrow \St(C_n,\BZ)[2] \overset{\pi}{\longrightarrow} \Sp_{2n}(\BZ)[2] \to 1.
\end{equation*}

Recall the surjective homomorphism $p: B_{2n+2} \to \FS_{2n+2}$ sending each generator~$\sigma_i$ of~$B_{2n+2}$ 
to the simple transposition~$s_i \in \FS_{2n+2}$, where $s_i$ permutes $i$ and~$i+1$ and leaves the remaining
elements of~$\{1, \ldots, 2n+2\}$ fixed. The kernel of~$p$ is the \emph{pure braid group}~$P_{2n+2}$.

\begin{theorem}\label{thm-image}
Assume $n\geq 2$.
(a) The images under~$f$ of the pure braid groups $P_{2n+2}$  and $P_{2n+1}$ 
are both equal to~$\St(C_n,\BZ)[2]$:
\begin{equation*}
f(P_{2n+2}) = f(P_{2n+1}) = \St(C_n,\BZ)[2].
\end{equation*}

(b) The image of the full braid group $B_{2n+2}$ fits into the short exact sequence
\begin{equation*}
1 \to \St(C_n,\BZ)[2] \longrightarrow f(B_{2n+2}) \longrightarrow \FS_{2n+2} \to 1.
\end{equation*}
\end{theorem}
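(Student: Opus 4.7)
The plan is to establish the chain of inclusions
\[
f(P_{2n+1})\subseteq f(P_{2n+2})\subseteq\St(C_n,\BZ)[2]\subseteq f(P_{2n+1}),
\]
forcing the three groups to coincide; part~(b) then follows by passing to the quotient.

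For the two upper inclusions, consider the composition $B_{2n+2}\xrightarrow{f}\St(C_n,\BZ)\xrightarrow{\pi}\Sp_{2n}(\BZ)\to\Sp_{2n}(\BF_2)$. Each $\sigma_i$ is sent to an element of order~$2$, so (as already noted in the proof of Theorem~\ref{thm-BtoSt}(v)) this composition factors as $\iota\circ p$ for some $\iota\colon\FS_{2n+2}\to\Sp_{2n}(\BF_2)$. The map $\iota$ is injective: for $n=2$ by the classical isomorphism $\FS_6\cong\Sp_4(\BF_2)$, and for $n\geq 3$ because the only normal subgroups of $\FS_{2n+2}$ are $\{1\}$, $A_{2n+2}$, $\FS_{2n+2}$, and none of the nontrivial quotients can embed into the simple group $\Sp_{2n}(\BF_2)$ (cardinality rules out the case of the full group). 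Hence the preimage in $B_{2n+2}$ of $\St(C_n,\BZ)[2]$ is exactly $\ker p=P_{2n+2}$, and since $P_{2n+1}\subseteq P_{2n+2}$ via the inclusion $B_{2n+1}\subseteq B_{2n+2}$, both $f(P_{2n+1})$ and $f(P_{2n+2})$ lie inside $\St(C_n,\BZ)[2]$.

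For the key inclusion $\St(C_n,\BZ)[2]\subseteq f(P_{2n+1})$, I use the short exact sequence
\[
1\to\langle w_1^4\rangle\to\St(C_n,\BZ)[2]\xrightarrow{\pi}\Sp_{2n}(\BZ)[2]\to 1
\]
and verify two points: (A)~$w_1^4\in f(P_{2n+1})$, and (B)~$\bar f(P_{2n+1})=\Sp_{2n}(\BZ)[2]$. Point~(A) is immediate from~\eqref{def-w1}, since $(\sigma_1\sigma_2\sigma_1)^4$ lies in $P_3\subseteq P_{2n+1}$ (its image under $p$ is the fourth power of the transposition $(1,3)$) and is mapped by $f$ to $w_1^4$. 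Granted~(B), any $g\in\St(C_n,\BZ)[2]$ satisfies $\pi(g)=\bar f(\beta)$ for some $\beta\in P_{2n+1}$, so $g\cdot f(\beta)^{-1}\in\ker\pi=\langle w_1^4\rangle\subseteq f(P_{2n+1})$, whence $g\in f(P_{2n+1})$. Part~(b) then follows: having $f(P_{2n+2})=\St(C_n,\BZ)[2]$, the quotient $f(B_{2n+2})/\St(C_n,\BZ)[2]$ coincides with the image of $f(B_{2n+2})$ in $\Sp_{2n}(\BF_2)$, which by the first paragraph is $\iota(\FS_{2n+2})\cong\FS_{2n+2}$.

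The main obstacle is step~(B). The containment $\bar f(P_{2n+1})\subseteq\Sp_{2n}(\BZ)[2]$ is free. The reverse inclusion is essentially the classical computation of the monodromy of the hyperelliptic family going back to Arnold, A'Campo and Magnus--Peluso~\cite{Ar,Ac,MP}, where the pure braid group is shown to act on the first integral homology via the level-$2$ congruence subgroup. To obtain the sharp statement for $P_{2n+1}$ (rather than $P_{2n+2}$), I would fix an explicit generating set for $\Sp_{2n}(\BZ)[2]$---say the squares $X_{i,j}^2$, $Y_{i,j}^2$, $Y'^2_{i,j}$, $Z_i^2$, $Z_i^{\prime 2}$ together with finitely many conjugates by Weyl-group representatives---and exhibit each such generator as $\bar f(\beta)$ for a concrete $\beta$ built from pure braid generators $A_{i,j}$ with $1\leq i<j\leq 2n+1$. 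Since $P_{2n+1}$ already contains all such $A_{i,j}$, this will show that the extra generators $A_{i,2n+2}$ present in $P_{2n+2}$ contribute nothing new at the level of the image under~$\bar f$, and so also nothing new at the level of~$f$ once~(A) is used to absorb the kernel $\langle w_1^4\rangle$.
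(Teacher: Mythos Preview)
Your overall architecture---the chain of inclusions, the reduction to $\bar f$ via the short exact sequence with kernel $\langle w_1^4\rangle$, and the factorisation through $\FS_{2n+2}$---matches the paper exactly. Two points deserve comment.

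First, your justification of the injectivity of $\iota:\FS_{2n+2}\to\Sp_{2n}(\BF_2)$ is garbled: the nontrivial proper quotients of $\FS_{2n+2}$ are $\{1\}$ and $\BZ/2$, both of which certainly embed in $\Sp_{2n}(\BF_2)$, so ``none of the nontrivial quotients can embed'' is false as stated. The paper argues instead that $\iota$ is nontrivial (the image of $\sigma_1$ is not the identity) and that $\iota(s_1),\iota(s_2)$ do not commute, which rules out the abelian quotient; hence $\ker\iota=\{1\}$.

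Second, and more substantially, your handling of step~(B) contains a genuine gap. You seem to believe that the classical monodromy result yields only $\bar f(P_{2n+2})=\Sp_{2n}(\BZ)[2]$ and that passing to $P_{2n+1}$ requires extra work via explicit generators. In fact A'Campo's theorem \cite[Th.~1(2)]{Ac} is stated for $B_{2n+1}$: his monodromy representation of $B_{2n+1}$ already has image containing $\Sp_{2n}(\BZ)[2]$. The paper simply quotes this and then checks, by an explicit change of basis, that A'Campo's representation coincides with the restriction of~$\bar f$ to $B_{2n+1}$; combined with the injectivity of $\iota$ (which gives $\bar f^{-1}(\Sp_{2n}(\BZ)[2])\cap B_{2n+1}=P_{2n+1}$), this yields $\bar f(P_{2n+1})=\Sp_{2n}(\BZ)[2]$ directly. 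Your alternative route---producing a generating set for $\Sp_{2n}(\BZ)[2]$ consisting of squares of root-subgroup generators and some Weyl conjugates, then exhibiting preimages---is not carried out, and the first step (that such elements generate the level-$2$ congruence subgroup) is itself a nontrivial theorem essentially equivalent in depth to A'Campo's result. So this is not a shortcut but a restatement of the hard part.
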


When $n=2$ the homomorphism $f$ is surjective, that is $f(B_6) = \St(C_2,\BZ)$. 
One recovers in this way the well-known isomorphism $\Sp_4(\BF_2) \cong {\FS}_6$.

In general $f(B_{2n+2})$ is of finite index in~$\St(C_n,\BZ)$ with index $i_n$ equal to
\begin{equation*}
i_n =  \frac{\card\,\Sp_{2n}(\BF_2)}{\card\,\FS_{2n+2}} 
= 2^{n^2} \frac{\prod_{i=1}^n(2^{2i}-1)}{(2n+2)!}\
\end{equation*}
(see \cite[p.\,64]{Ch}).
The values of~$i_n$ grow very rapidly; for low subscripts they are: 
$i_2 = 1$, $i_3 = 36$, $i_4 = 13\,056$, $i_5 = 51\,806\,208$, 
$i_6 = 2\,387\,230\,064\,640$.

\begin{proof}
By Remark\,\ref{rems-f}\,(b) the kernel $\langle w_1^4 \rangle$ of $\pi: \St(C_n,\BZ) \to \Sp_{2n}(\BZ)$
is in the image of~$f$. 
Therefore, in view of the definitions of~$\St(C_n,\BZ)[2]$ and of~$P_{2n+2}$
it is enough to check that the image of $\bar{f} = \pi \circ f : B_{2n+2} \to \Sp_{2n}(\BZ)$ satisfies the following two properties: 
(i) $\bar{f}(P_{2n+2}) = \bar{f}(P_{2n+1}) = \Sp_{2n}(\BZ)[2]$ and (ii) there is an isomorphism
$\bar{f}(B_{2n+2})/\Sp_{2n}(\BZ)[2] \cong \FS_{2n+2}$. 

As we observed in Part\,(v) of the proof of Theorem\,\ref{thm-BtoSt},
the composition $B_{2n+2} \overset{\bar{f}}{\longrightarrow} \Sp_{2n}(\BZ) \longrightarrow \Sp_{2n}(\BF_2)$
factors through ${\FS}_{2n+2}$. 
Hence we have the inclusions $\bar{f}(P_{2n+1}) \subset \bar{f}(P_{2n+2}) \subset \Sp_{2n}(\BZ)[2]$. 

The induced morphism ${\FS}_{2n+2} \rightarrow \Sp_{2n}(\BF_2)$ 
(which is non-trivial since the image of $\sigma_1$ in $\Sp_{2n}(\BF_2)$ is not the identity) 
is injective since the images of $\sigma_1$ and $\sigma_2$ do not commute
and ${\FS}_{2n+2}$ has no non-commutative proper quotient when $n\geq 2$.
This implies that Property\,(ii) above follows from~(i). Let us now prove the latter.

In order to establish the opposite inclusion $\bar{f}(P_{2n+1}) \supset \Sp_{2n}(\BZ)[2]$, 
it is sufficient to prove $\bar{f}(B_{2n+1}) \supset\Sp_{2n}(\BZ)[2]$.
We appeal to~\cite{Ac}. In \emph{loc.\ cit}.\ A'Campo considers the monodromy representation of~$B_{2n+1}$ 
in the free $\BZ$-module~$V$ with basis $\{\delta_i, \, i=1 ,\ldots,2n\}$, 
endowed with an alternating form $I$ satisfying $I(\delta_i,\delta_{i+1})=1$ for $i=1,\ldots,2n-1$. 
This form is non-degenerate.
The monodromy representation maps each generator~$\sigma_i$ of~$B_{2n+1}$ 
to the automorphism $T_i$ of~$V$ defined by
\begin{equation*}
T_i(\delta_j) =
\begin{cases}
\delta_j& \text{ if } j\neq i-1,i+1,\\
\delta_j+\delta_i &\text{ if }j=i-1,\\
\delta_j-\delta_i &\text{ if }j=i+1.
\end{cases}
\end{equation*}
One thus obtains a representation of~$B_{2n+1}$ in the symplectic group $\Sp_{2n}(\BZ)$.

Now by~\cite[Th.\,1\,(2)]{Ac} the image of this monodromy representation contains the congruence subgroup~$\Sp_{2n}(\BZ)[2]$.
To conclude, it suffices to check that this representation is isomorphic to our~$\bar f$. Indeed, 
If we take 
\[
\left\{ \delta_1,\delta_1+\delta_3,\ldots,\delta_1+\delta_3+\cdots+\delta_{2n-1}, \delta_2,\delta_4,\ldots,\delta_{2n} \right\}
\]
as a basis of~$V$, the matrix of the form~$I$ becomes in $V \cong \BZ^{2n}$ 
the matrix~$J_{2n}$ considered in \S\,\ref{ssec-Sp}. 
One easily checks that in this new basis the matrix of~$T_i$ is equal to that of~$\bar f(\sigma_i)$ for $i=1,\ldots,2n$. 
\end{proof}

\begin{rem}
As noted by a referee, Conditions (i)--(ii) above may also be derived from recent results of
Bloomquist, Patzt and Scherich (see~\cite{BPS}).
\end{rem}

\begin{rem}
The squares of all the generators $x_{i,j}$, $y_{i,j}$, $y'_{i,j}$, $z_i$, $z'_i$
of the Steinberg group~$\St(C_n,\BZ)$ lie in the image of $f$, 
and even in the image of the pure braid group~$P_{2n+2}$ since they are in~$\St(C_n, \BZ)[2]$.
\end{rem}

\begin{rem}
Given a prime~$p$, the level~$p$ congruence subgroup $\Sp_{2n}(\BZ)[p]$ is
defined as the kernel of the homomorphism $\Sp_{2n}(\BZ) \rightarrow \Sp_{2n}(\BF_p)$ induced by reduction modulo~$p$.
By~\cite[Prop.\,6.7]{FM} it is torsion-free when $p\neq 2$.
Let $\St(C_n,\BZ)[p]$ be the kernel of the composite map
$\St(C_n,\BZ) \overset{\pi}{\longrightarrow} \Sp_{2n}(\BZ) \rightarrow \Sp_{2n}(\BF_p)$;
it is of finite index in~$\St(C_n,\BZ)$.
Now $\St(C_n,\BZ)[p]$ is an extension of the torsion-free group $\Sp_{2n}(\BZ)[p]$ by the infinite cyclic group $\langle w_1^4 \rangle$;
therefore it is torsion-free as well ($p\neq 2$).
It follows that the Steinberg group $\St(C_n,\BZ)$ is \emph{virtually torsion-free}, 
i.e.\ contains a finite-index torsion-free subgroup.
\end{rem}

\section{The kernel of~$f$}\label{sec-kern}

In this section we determine the kernels of $\bar{f}: B_{2n+2} \to \Sp_{2n}(\BZ)$ and of $f: B_{2n+2} \to \St(C_n,\BZ)$.

\subsection{The elements~$\Delta_k$ and their images}\label{ssec-Delta}

We start with a few simple-looking elements of the kernel of~$f$.

For $k\in \{2, \ldots, 2n+2\}$
let $\Delta_k \in B_k$ be the longest reduced positive braid in the group~$B_k$ of braids with $k$ strands.
It is defined inductively by $\Delta_2 = \sigma_1$ and 
$\Delta_{k+1} = (\sigma_1 \sigma_2 \cdots \sigma_k) \, \Delta_k$ for $k\geq 2$.
It is well known (see \cite{Bi} or \cite[Sect.\,1.3.3]{KT}) that its square $\Delta_k^2$ generates the center of~$B_k$.

We now consider $\Delta_k$ as an element of~$B_{2n+2}$ under the natural inclusion $B_k \to B_{2n+2}$
sending each generator $\sigma_i \in B_k$ ($1 \leq i \leq k-1$) to $\sigma_i \in B_{2n+2}$.
We may thus look for the image of~$\Delta_k$ under the homomorphism $f: B_{2n+2} \to \St(C_n,\BZ)$. 

Let us first compute the image $f(\Delta_k^2)$ of the squares of some of the braids~$\Delta_k$. 

\begin{proposition}\label{prop-fDelta2}
Let $f: B_{2n+2} \to \St(C_n,\BZ)$ be the homomorphism defined in Theorem\,\ref{thm-BtoSt}.
For each $i \in \{1, \ldots, n\}$ we have
\begin{equation*}
f(\Delta_{2i+1}^2) = \left(w_1^4 \right)^{i(i-1)/2} \prod_{j=1}^i\, w_j^2 \,.
\end{equation*}
Moreover,
\begin{equation*}
f(\Delta_{2n+2}^2) = \left(w_1^4 \right)^{n(n+1)/2} .
\end{equation*}
\end{proposition}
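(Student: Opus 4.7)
The plan is to prove the first formula by induction on $i$ and then to derive the second by essentially the same kind of one-step calculation.

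\emph{Base case} ($i=1$): Since $\Delta_3 = \sigma_1\sigma_2\sigma_1$, Remark~\ref{rems-f}(b) gives $f(\Delta_3) = w_1$, whence $f(\Delta_3^2) = w_1^2$, which matches the formula (note that $i(i-1)/2 = 0$ at $i=1$).

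\emph{Inductive step.} Assuming the formula for $i$, I would use the Garside-type identity $\Delta_{k+1} = (\sigma_1\sigma_2\cdots\sigma_k)\,\Delta_k$ iterated twice to write $\Delta_{2i+3} = U\,\Delta_{2i+1}$, where $U = (\sigma_1\cdots\sigma_{2i+2})(\sigma_1\cdots\sigma_{2i+1})$, and hence
\[
\Delta_{2i+3}^2 = U\,\Delta_{2i+1}\,U\,\Delta_{2i+1}.
\]
Since $\Delta_{2i+1}^2$ is central in the subgroup $B_{2i+1} \subset B_{2n+2}$, one can factor out $f(\Delta_{2i+1}^2)$ and reduce the problem to simplifying an explicit product involving $f(\sigma_j)$ for $j \le 2i+2$ and the conjugate $f(U\,\Delta_{2i+1}\,U^{-1}\Delta_{2i+1}^{-1})$. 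This reduction is carried out using the Steinberg relations from Section~\ref{ssec-St} together with the $w$-conjugation identities \eqref{eq-wxw1}--\eqref{eq-wwyww2} of Section~\ref{ssec-w}, and Lemma~\ref{lem-w4} (i.e.\ $w_j^4 = w_1^4$ central). The outcome is that the leftover factor equals $(w_1^4)^i \cdot w_{i+1}^2$, which advances the exponent $i(i-1)/2$ by $i$ (giving $(i+1)i/2$) and appends one new factor $w_{i+1}^2$, completing the induction.

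\emph{Second formula.} The braid $\Delta_{2n+2}$ is the Garside element of $B_{2n+2}$, and under the hyperelliptic double cover recalled in the Introduction its image $\bar f(\Delta_{2n+2}) \in \Sp_{2n}(\BZ)$ equals $-\Id_{2n}$; consequently $\bar f(\Delta_{2n+2}^2) = \Id$, so $f(\Delta_{2n+2}^2) \in \ker\pi = \langle w_1^4\rangle$ and is of the form $(w_1^4)^m$. To pin down $m = n(n+1)/2$, apply the same recipe once more using $\Delta_{2n+2} = (\sigma_1\cdots\sigma_{2n+1})\,\Delta_{2n+1}$ together with the formula $f(\Delta_{2n+1}^2) = (w_1^4)^{n(n-1)/2} \prod_{j=1}^n w_j^2$ just proved: the extra factor simplifies in $\St(C_n,\BZ)$ to $(w_1^4)^n \prod_j w_j^{-2}$, giving $f(\Delta_{2n+2}^2) = (w_1^4)^{n(n-1)/2+n} = (w_1^4)^{n(n+1)/2}$.

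\emph{Main obstacle.} The hard part is the delicate bookkeeping in the Steinberg group: the $w$-conjugation rules of Section~\ref{ssec-w} turn generators into inverses, so nearly every interchange produces a $w_j^{\pm 2}$ factor, and consolidating these into a power of $w_1^4$ via Lemma~\ref{lem-w4} requires careful accounting. The arithmetic identity $i(i+1)/2 - i(i-1)/2 = i$ is the precise count of such consolidations occurring in the step from $\Delta_{2i+1}^2$ to $\Delta_{2i+3}^2$, and verifying this match is the crux of the calculation.
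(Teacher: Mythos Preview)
Your overall plan---induct on~$i$, factor out $f(\Delta_{2i+1}^2)$, and show the remaining factor equals $(w_1^4)^i w_{i+1}^2$---is the same strategy the paper follows. But as written the proposal is only a sketch, and two points deserve correction.

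First, your recursion is less convenient than the one the paper uses, and your justification for ``factoring out'' is off. You write $\Delta_{2i+3}^2 = U\,\Delta_{2i+1}\,U\,\Delta_{2i+1}$ with $U=(\sigma_1\cdots\sigma_{2i+2})(\sigma_1\cdots\sigma_{2i+1})$ and then invoke centrality of $\Delta_{2i+1}^2$ in~$B_{2i+1}$; but $U\notin B_{2i+1}$, so that centrality is irrelevant. (One can of course write $U\Delta U\Delta=(U\,\Delta U\Delta^{-1})\Delta^2$ trivially, but then the object to simplify is $U\cdot(\Delta_{2i+1}U\Delta_{2i+1}^{-1})$, not the commutator $U\Delta_{2i+1}U^{-1}\Delta_{2i+1}^{-1}$ you mention.) The paper instead uses the palindromic recursion
\[
\Delta_{k+1}^2 \;=\; \Delta_k^2\,(\sigma_k\cdots\sigma_1)(\sigma_1\cdots\sigma_k),
\]
applied twice to pass from $\Delta_{2i-1}^2$ to~$\Delta_{2i+1}^2$. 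This factors out $\Delta_{2i-1}^2$ on the nose and leaves two palindromes $(\sigma_{2i-1}\cdots\sigma_1\sigma_1\cdots\sigma_{2i-1})$ and $(\sigma_{2i}\cdots\sigma_1\sigma_1\cdots\sigma_{2i})$ to evaluate.

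Second, the actual content of the proof---the ``delicate bookkeeping'' you allude to---is not carried out. The paper handles it via two explicit preparatory lemmas computing $f(\sigma_1\cdots\sigma_k)$ and $f(\sigma_k\cdots\sigma_1)$ as products of the form $\bigl(\prod_j w_j\bigr)(x_{1,2}x_{2,3}\cdots)$ times a short tail. With these in hand, each palindrome collapses: one pushes the $w_j$'s to the left using $x_{k+1,k}\,w_kw_{k+1}=w_kw_{k+1}\,x_{k,k+1}^{-1}$ (a consequence of~\eqref{eq-wyw}), the chains of $x$'s cancel telescopically, and only a product of $w_j^2$'s survives. Combining the two palindromes yields exactly $w_1^{4(i-1)}w_i^2$. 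Without these closed formulas (or an equivalent device) the reduction you describe is an assertion rather than a computation. The second formula is then obtained by the same palindromic step with $k=2n+1$, giving the extra factor $\prod_{j=1}^n w_j^2$ directly; the detour through $\bar f(\Delta_{2n+2})=-\Id_{2n}$ is unnecessary.
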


\begin{rem}
The element $\Delta_{2n+2}^2$ generating the center of~$B_{2n+2}$ belongs to the kernel of~$\bar f$;
it does not belong to the kernel of~$f$ 
since the order of~$w_1^4$ is infinite in the Steinberg group~$\St(C_n,\BZ)$ (see~\cite[Th.\,6.3]{Ma}).
\end{rem}

The following consequence of Proposition~\ref{prop-fDelta2} provides us with 
non-trivial elements of the kernel of~$f$.

\begin{corollary}\label{coro-fDelta4}
(a) If $1 \leq i \leq n$, then
\begin{equation*}
f \left(\Delta_{2i+1}^4  \, \Delta_3^{-4i^2} \right) = 1.
\end{equation*}
(b) We also have
\begin{equation*}
f \left( \Delta_{2n+2}^2 \,\Delta_3^{-2n(n+1)} \right) = 1.
\end{equation*}
\end{corollary}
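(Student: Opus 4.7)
The plan is to deduce both parts directly from Proposition~\ref{prop-fDelta2} by elementary manipulation, relying on Lemma~\ref{lem-w4} (the centrality of $w_i^4$ and the identity $w_i^4 = w_1^4$) together with the pairwise commutation $w_i w_j = w_j w_i$ recorded just after \eqref{def-wz}.

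First I would compute $f(\Delta_3^{2m})$ for $m \geq 1$. Specializing Proposition~\ref{prop-fDelta2} to $i=1$ makes the product over $j$ collapse to $w_1^2$ and the central prefactor disappear, yielding $f(\Delta_3^2) = w_1^2$. Hence
\[
f(\Delta_3^{4i^2}) = (w_1^2)^{2i^2} = (w_1^4)^{i^2},
\qquad
f(\Delta_3^{2n(n+1)}) = (w_1^2)^{n(n+1)} = (w_1^4)^{n(n+1)/2}.
\]

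For part~(a), I would then square the formula $f(\Delta_{2i+1}^2) = (w_1^4)^{i(i-1)/2} \prod_{j=1}^i w_j^2$. Since $w_1^4$ is central and the $w_j$'s pairwise commute, the square can be rearranged freely, giving
\[
f(\Delta_{2i+1}^4) = (w_1^4)^{i(i-1)} \prod_{j=1}^i w_j^4 = (w_1^4)^{i(i-1)} \cdot (w_1^4)^i = (w_1^4)^{i^2},
\]
where the second equality uses $w_j^4 = w_1^4$ from Lemma~\ref{lem-w4}. Comparing with the value of $f(\Delta_3^{4i^2})$ computed above, the product $\Delta_{2i+1}^4 \Delta_3^{-4i^2}$ maps to~$1$.

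For part~(b), Proposition~\ref{prop-fDelta2} already gives $f(\Delta_{2n+2}^2) = (w_1^4)^{n(n+1)/2}$, which matches $f(\Delta_3^{2n(n+1)})$ from the computation above, so the stated element is again in the kernel. There is no real obstacle: once Proposition~\ref{prop-fDelta2} is available, both assertions reduce to bookkeeping with the exponents $i(i-1)/2 + i = i(i+1)/2$ and $n(n+1)/2$, and to the fact that $w_1^4$ is a central element of infinite order through which all powers factor uniformly.
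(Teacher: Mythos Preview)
Your proof is correct and follows essentially the same approach as the paper: both arguments specialize Proposition~\ref{prop-fDelta2} to $i=1$ to obtain $f(\Delta_3^4)=w_1^4$, then square the general formula and invoke Lemma~\ref{lem-w4} to collapse $\prod_j w_j^4$ to $(w_1^4)^i$, yielding $f(\Delta_{2i+1}^4)=(w_1^4)^{i^2}$. The only difference is cosmetic---you spell out the commutation of the~$w_j$'s explicitly, and your closing remark about infinite order is unnecessary here (it plays no role in the equality $f(\cdots)=1$).
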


\begin{proof}
By Proposition~\ref{prop-fDelta2} and Lemma~\ref{lem-w4} we have
$f \left(\Delta_{2i+1}^4 \right) = \left(w_1^4 \right)^{i^2}$. In the special case $i=1$
we obtain $f(\Delta_3^4)= w_1^4$. Therefore
\begin{equation*}
f \left(\Delta_{2i+1}^4 \right) = \left(w_1^4 \right)^{i^2} = f \left( \Delta_3^{4i^2} \right) .
\end{equation*}
Similarly, $f \left( \Delta_{2n+2}^2 \right) = f ( \Delta_3^{2n(n+1)} )$.
\end{proof}

To prove Proposition\,\ref{prop-fDelta2} we need two preliminary lemmas.

\begin{lemma}\label{lem-fsigma-odd}
We have 
\begin{equation*}
f(\sigma_1 \sigma_2 \cdots\sigma_{2i}) =
\begin{cases}
w_1z_1\inv & \text{ if } i = 1, \\
\noalign{\medskip}
\left( \prod_{j=1}^{i} \, w_j \right) 
\left( x_{1,2} x_{2,3} \cdots x_{i-1,i} \right) z_i\inv &\text{ if } 1 < i \leq n,
\end{cases}
\end{equation*}
and
\begin{equation*}
f(\sigma_1 \sigma_2 \cdots\sigma_{2i+1}) =
\begin{cases}
z_1 &\text{ if } i=0, \\
\noalign{\medskip}
w_1y_{1,2}\inv z_2 &\text{ if } i=1, \\
\noalign{\medskip}
\left( \prod_{j=1}^{i} \, w_j \right) \left( x_{1,2} x_{2,3} \cdots x_{i-1,i} \right) y_{i,i+1}\inv z_{i+1} &\text{ if } 1 < i<n,\\
\noalign{\medskip}
\left( \prod_{j=1}^{n}\, w_j \right) \left( x_{1,2} x_{2,3} \cdots x_{n-1,n} \right) &\text{ if } i=n.
\end{cases}
\end{equation*}
\end{lemma}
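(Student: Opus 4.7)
The natural approach is induction on $i$, proving both formulas simultaneously. Setting $u_k = f(\sigma_1\sigma_2\cdots\sigma_k)$, we have $u_k = u_{k-1}\,f(\sigma_k)$, so at each step I would plug in the value of $f(\sigma_k)$ from Theorem~\ref{thm-BtoSt} and simplify using the Steinberg relations collected in Sections~\ref{ssec-St}--\ref{ssec-w}.

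For the base cases, $u_1=z_1$ is immediate, $u_2=z_1 z_1^{\prime-1}=w_1 z_1^{-1}$ follows from the definition $w_1=z_1 z_1^{\prime-1}z_1$ of~\eqref{def-wz}, and $u_3=u_2\cdot z_1 z_2 y_{1,2}^{-1}=w_1 z_2 y_{1,2}^{-1}=w_1 y_{1,2}^{-1}z_2$, the last equality because $z_2$ and $y_{1,2}$ commute (the sum $2\varepsilon_2+\varepsilon_1+\varepsilon_2$ is not a root, so the ``remaining pairs commute'' clause of Section~\ref{ssec-St} applies).

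The even-to-odd step is straightforward. For $1<i<n$ one multiplies the assumed expression for $u_{2i}$ by $f(\sigma_{2i+1})=z_i z_{i+1} y_{i,i+1}^{-1}$; the trailing $z_i^{-1}$ cancels the leading $z_i$, and $z_{i+1}$ commutes past $y_{i,i+1}^{-1}$ for the same reason as above. The boundary case $i=n$ uses $f(\sigma_{2n+1})=z_n$ and the cancellation is immediate. The odd-to-even step is the substantive one. Multiplying $u_{2i+1}$ on the right by $f(\sigma_{2i+2})=z_{i+1}^{\prime-1}$ produces the tail $y_{i,i+1}^{-1}z_{i+1}z_{i+1}^{\prime-1}$. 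I would rewrite $z_{i+1}z_{i+1}^{\prime-1}=w_{i+1}z_{i+1}^{-1}$ using~\eqref{def-wz}, and then rearrange the conjugation formula~\eqref{eq-wyw} (applied with $i+1$ in the role of the subscript of $w$, using $y_{i+1,i}=y_{i,i+1}$) into the form $y_{i,i+1}^{-1}w_{i+1}=w_{i+1}x_{i,i+1}$, turning the tail into $w_{i+1}x_{i,i+1}z_{i+1}^{-1}$. Finally I slide $w_{i+1}$ leftward past the block $x_{1,2}x_{2,3}\cdots x_{i-1,i}$ --- each factor commutes with $w_{i+1}=w_{2\varepsilon_{i+1}}$ since its two indices are both strictly less than $i+1$, by the general rule~\eqref{wxw} --- and merge it into the product $\prod_{j=1}^i w_j$, using that all the $w_j$'s commute pairwise as recalled in Section~\ref{ssec-w}.

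The only genuine obstacle is bookkeeping: one must verify the many commutation assertions (most of them instances of the ``remaining pairs commute'' clause) and correctly derive $y_{i,i+1}^{-1}w_{i+1}=w_{i+1}x_{i,i+1}$ from~\eqref{eq-wyw}. Once these are in hand, the induction runs to completion without further calculation.
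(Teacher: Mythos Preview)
Your proposal is correct and follows essentially the same route as the paper: induction on the length of $\sigma_1\cdots\sigma_k$, with the key odd-to-even step handled by writing $z_{i+1}z_{i+1}^{\prime-1}=w_{i+1}z_{i+1}^{-1}$, applying~\eqref{eq-wyw} in the form $y_{i,i+1}^{-1}w_{i+1}=w_{i+1}x_{i,i+1}$, and commuting $w_{i+1}$ past the $x$-block. The paper's write-up differs only in indexing (it passes from $2i-1$ to $2i$ rather than from $2i+1$ to $2i+2$) and in giving slightly less justification for the commutation steps.
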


\begin{proof}
We proceed by induction on the length $k$ of the braid word $\sigma_1 \cdots \sigma_k$. 
For $k = 1,2$ we have 
$f(\sigma_1) = z_1$ and $f(\sigma_1 \sigma_2)  = z_1 z_1^{\prime -1} = w_1 z_1\inv$ by definition of~$f$
and of~$w_1$.

For $k\geq 3$, using the induction and~\eqref{eq-wyw}, we obtain
\begin{eqnarray*}
f(\sigma_1\cdots\sigma_{2i})
& = & f(\sigma_1\cdots\sigma_{2i-1}) f(\sigma_{2i}) \\
& = & \left( \prod_{j=1}^{i-1} \, w_j \right) \left( x_{1,2} x_{2,3} \cdots x_{i-2,i-1} \right) y_{i-1,i}\inv z_i z_i^{\prime -1} \\
& = & \left( \prod_{j=1}^{i-1} \, w_j \right) \left( x_{1,2} x_{2,3} \cdots x_{i-2,i-1} \right) y_{i-1,i}\inv w_i z_i\inv \\
& = & \left( \prod_{j=1}^{i-1} \, w_j \right) \left( x_{1,2} x_{2,3} \cdots x_{i-2,i-1} \right)  w_i x_{i-1,i} z_i\inv .
\end{eqnarray*}
Observing that $w_i$ commutes with the $x_{j,j+1}$'s to its left, we obtain the desired formula.

To obtain the formula for $f(\sigma_1\cdots\sigma_{2i+1})$ it suffices to multiply the formula for $f(\sigma_1\cdots\sigma_{2i})$
on the right by $f(\sigma_{2i+1})$ which is equal to $z_i\inv z_i y_{i,i+1}\inv z_{i+1}$ if $1 < i < n$
and to~$z_n$ if $i=n$ and to cancel the product $z_i\inv z_i$.
\end{proof}

A similar computation yields the following result.

\begin{lemma}\label{lem-fsigma-even}
We have 
\begin{equation*}
f(\sigma_{2i}\cdots\sigma_2\sigma_1) =
\begin{cases}
z_1\inv w_1 & \text{ if } i = 1, \\
\noalign{\medskip}
z_i^{\prime-1}z_i \left(\prod_{j=1}^{i-1} \, w_j \right) \left( x_{i,i-1} \cdots x_{3,2} x_{2,1} \right)
&\text{ if } 1 < i \leq n,
\end{cases}
\end{equation*}
and
\begin{equation*}
f(\sigma_{2i+1}\cdots\sigma_2\sigma_1)=
\begin{cases}
z_{i+1} \left( \prod_{j=1}^{i} \, w_j \right)  \left( x_{i+1,i} x_{i,i-1} \cdots x_{3,2} x_{2,1} \right) &\text{ if } 1 \leq i < n,\\
\noalign{\medskip}
\left( \prod_{j=1}^{n} \, w_j \right)  \left( x_{n,n-1} x_{n-1,n-2} \cdots x_{3,2} x_{2,1} \right) &\text{ if } i=n.
\end{cases}
\end{equation*}
\end{lemma}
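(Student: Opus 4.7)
The plan is to mimic the proof of Lemma~\ref{lem-fsigma-odd}, proceeding by induction on the length $k$ of the braid word $\sigma_k\cdots\sigma_2\sigma_1$ and using the recursive decomposition $f(\sigma_k\cdots\sigma_1) = f(\sigma_k)\cdot f(\sigma_{k-1}\cdots\sigma_1)$. The base case $k=2$ is immediate: $f(\sigma_2\sigma_1) = z_1^{\prime-1}z_1$, and the equality $z_1^{\prime-1}z_1 = z_1^{-1}w_1$ is just the definition $w_1 = z_1z_1^{\prime-1}z_1$ rewritten.

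For the even step, I would multiply the formula for $f(\sigma_{2i-1}\cdots\sigma_1)$ (which, by the induction hypothesis applied to the odd case with index $i-1$, equals $z_i(\prod_{j=1}^{i-1}w_j)(x_{i,i-1}\cdots x_{2,1})$) on the left by $f(\sigma_{2i}) = z_i^{\prime-1}$. The claimed formula is obtained without any rewriting.

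The odd step is the one requiring work. For $1\leq i<n$, start from $f(\sigma_{2i+1}\cdots\sigma_1) = z_iz_{i+1}y_{i,i+1}^{-1}\cdot f(\sigma_{2i}\cdots\sigma_1)$ and substitute the even-case formula just established. The first move is to pull $z_{i+1}$ all the way to the left: this is legitimate since $z_{i+1}$ commutes with all the factors $z_i$, $z_i^{\prime-1}$, $y_{i,i+1}$, the $w_j$'s for $j\leq i$, and the $x_{k,k-1}$'s for $k\leq i$ (no index $i+1$ appears in any of them). Next, commute the surviving $z_i$ past $y_{i,i+1}^{-1}$, and combine the resulting block $z_iz_i^{\prime-1}z_i$ into $w_i$ via \eqref{def-wz}. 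The crucial rewrite is then
\begin{equation*}
y_{i,i+1}^{-1}\, w_i = w_i\, x_{i+1,i},
\end{equation*}
which is an immediate consequence of \eqref{eq-wyw}, since that relation gives $w_i x_{i+1,i}w_i^{-1} = y_{i,i+1}^{-1}$. Finally, $x_{i+1,i}$ commutes with each $w_j$ for $j<i$ (as $j\notin\{i,i+1\}$, by the general principle recorded just after \eqref{wxw}), so it can be slid rightward to sit in front of the chain $x_{i,i-1}\cdots x_{2,1}$, producing the desired form $z_{i+1}(\prod_{j=1}^{i}w_j)(x_{i+1,i}x_{i,i-1}\cdots x_{2,1})$. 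The special case $i=n$ uses $f(\sigma_{2n+1})=z_n$ and amounts to combining $z_n\cdot z_n^{\prime-1}z_n$ into $w_n$ at the appropriate spot.

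The main obstacle, as in Lemma~\ref{lem-fsigma-odd}, is the bookkeeping of commutation relations among the many generators; the substantive step is the single identity $y_{i,i+1}^{-1}w_i = w_ix_{i+1,i}$, after which the remainder of the argument is routine.
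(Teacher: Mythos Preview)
Your proposal is correct and follows exactly the approach the paper intends: the paper's own proof consists of the single sentence ``A similar computation yields the following result,'' referring back to the inductive argument for Lemma~\ref{lem-fsigma-odd}, and what you have written is precisely that similar computation, with the key step being the identity $y_{i,i+1}^{-1}w_i = w_i x_{i+1,i}$ from~\eqref{eq-wyw}. One small presentational point: in the odd step your substitution implicitly treats the $i=1$ even formula as $z_1^{\prime-1}z_1$ (with empty $w$-product and empty $x$-chain) rather than the equivalent $z_1^{-1}w_1$, so that the manipulation is uniform; you might say this explicitly.
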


We can now prove Proposition\,\ref{prop-fDelta2}.

\begin{proof}[Proof of Proposition\,\ref{prop-fDelta2}]
(a) For the first equality we argue by induction on~$i$. 
For $i=1$ we have 
$f(\Delta_3^2) = f(\sigma_1\sigma_2\sigma_1)^2 = w_1^2$.
The induction formula 
\begin{equation}\label{eq-induction-Delta2}
\Delta^2_{k+1} = \Delta^2_k \, (\sigma_k \sigma_{k-1} \cdots \sigma_2\sigma_1)
(\sigma_1\sigma_2 \cdots \sigma_{k-1}\sigma_k)
\end{equation}
allows us to obtain $f(\Delta^2_{2i+1})$ from $f(\Delta^2_{2i-1})$ by multiplying the latter on the right by
$f(\sigma_{2i-1}\cdots\sigma_1\sigma_1\cdots\sigma_{2i-1}) f(\sigma_{2i}\cdots\sigma_1\sigma_1\cdots\sigma_{2i})$.

We first compute $f(\sigma_{2i-1}\cdots\sigma_1\sigma_1\cdots\sigma_{2i-1})$.
By Lemmas~\ref{lem-fsigma-odd} and~\ref{lem-fsigma-even},
\begin{multline*}
f(\sigma_{2i-1}\cdots\sigma_1\sigma_1\cdots\sigma_{2i-1}) \\
= z_i\left ( \prod_{j=1}^{i-1}w_j \right )
x_{i,i-1} x_{i-1,i-2} \cdots x_{3,2}x_{2,1}
\left ( \prod_{j=1}^{i-1}w_j \right )
x_{1,2}x_{2,3}\cdots x_{i-2,i-1}y_{i-1,i}\inv z_i \, .
\end{multline*}
Using~\eqref{eq-wyw}, we have
$x_{k+1,k} w_k w_{k+1} = w_k w_{k+1} x_{k,k+1}^{-1}$ for all $k= 1, \ldots, i-1$.
These relations allow us to push all the $w_j$'s to the left, yielding
\begin{eqnarray*}
&& \hskip -40pt
f(\sigma_{2i-1}\cdots\sigma_1\sigma_1\cdots\sigma_{2i-1}) = \\
& = & z_i \left ( \prod_{j=1}^{i-2}w_j^2 \right ) w_{i-1} x_{i,i-1} w_{i-1} \times \\
&& \times 
\left( x_{i-2,i-1}\inv\cdots x_{2,3}\inv x_{1,2}\inv x_{1,2}x_{2,3}\cdots x_{i-2,i-1} \right) y_{i-1,i}\inv z_i \\
& = & z_i \left ( \prod_{j=1}^{i-1}w_j^2 \right )  (y_{i-1,i} y_{i-1,i}\inv) z_i 
= z_i \left ( \prod_{j=1}^{i-1}w_j^2 \right ) z_i 
= \left ( \prod_{j=1}^{i-1}w_j^2 \right ) z_i^2
\end{eqnarray*}
since $z_i$ commutes with the other~$w_j$'s.

In the same way we obtain
\begin{eqnarray*}
&& \hskip -30pt
f(\sigma_{2i}\ldots\sigma_1\sigma_1\ldots\sigma_{2i})=\\
& = & z_i^{\prime-1}z_i \left ( \prod_{j=1}^{i-1}w_j \right )
x_{i,i-1} \cdots x_{3,2}x_{2,1} \left ( \prod_{j=1}^i w_j \right )
x_{1,2} \cdots x_{i-1,i} z_i^{-1} \\
& = & z_i^{\prime-1}z_i \left ( \prod_{j=1}^{i-1}w_j^2 \right ) w_i 
\left( x_{i-1,i}\inv \cdots x_{2,3}\inv x_{1,2}\inv x_{1,2} \cdots x_{i-1,i} \right) z_i^{-1} \\
& = & z_i^{\prime-1} z_i z'_i \left ( \prod_{j=1}^{i-1}w_j^2 \right ) w_i \, ,
\end{eqnarray*}
the last equality resulting from~\eqref{eq-wxw1} and the fact that $z'_i$ commutes with $w_j$ when $j\neq i$.

Using the previous calculations, Equation~\eqref{eq-wxw2} and Lemma~\ref{lem-w4}, we deduce 
\begin{eqnarray*}
f(\Delta_{2i+1}^2)
& = & f(\Delta_{2i-1}^2) f(\sigma_{2i-1}\cdots\sigma_1\sigma_1\cdots\sigma_{2i-1}) 
f(\sigma_{2i}\cdots\sigma_1\sigma_1\cdots\sigma_{2i}) \\
&=& f(\Delta_{2i-1}^2)\left ( \prod_{j=1}^{i-1}w_j^2 \right )
z_i^2z_i^{\prime-1}z_i z'_i  \left ( \prod_{j=1}^{i-1}w_j^2\right ) w_i \\
&= & f(\Delta_{2i-1}^2) \left ( \prod_{j=1}^{i-1}w_j^4 \right ) z_i w_i z'_i w_i  
= f(\Delta_{2i-1}^2) \left ( \prod_{j=1}^{i-1}w_j^4 \right ) z_i  z_i\inv w_i^2  \\
& = & f(\Delta_{2i-1}^2) \left ( \prod_{j=1}^{i-1}w_j^4 \right ) w_i^2 
= f(\Delta_{2i-1}^2) \, w_1^{4(i-1)}  w_i^2 \, .
\end{eqnarray*}
Using the induction hypothesis, we obtain
\begin{equation*}
f(\Delta_{2i+1}^2)
= (w_1^4)^{(i-1)(i-2)/2} \left (\prod_{j=1}^{i-1}\, w_j^2 \right ) w_1^{4(i-1)} w_i^2
= (w_1^4)^{i(i-1)/2} \prod_{j=1}^{i}\, w_j^2 \, , 
\end{equation*}
which is the desired formula.

(b) We now prove the second formula of Proposition~\ref{prop-fDelta2}. 
By Lemmas~\ref{lem-fsigma-odd} and~\ref{lem-fsigma-even} we have
\begin{multline*}
f(\sigma_{2n+1} \cdots\sigma_1\sigma_1 \cdots \sigma_{2n+1}) \\
= \left( \prod_{j=1}^{n} \, w_i \right )\left( x_{n,n-1} \cdots x_{2,1} \right )
\left( \prod_{j=1}^{n} \, w_i \right) \left( x_{1,2} x_{2,3} \cdots x_{n-1,n} \right)  .
\end{multline*}
Pushing the $w_i$'s to the left as above, we obtain
\begin{equation*}
f(\sigma_{2n+1} \cdots\sigma_1\sigma_1 \cdots \sigma_{2n+1})
=  \prod_{j=1}^{i=n} \, w_j^2 \, .
\end{equation*}
Hence, by the induction formula, the first formula of the proposition for the case $i=n$,
and Lemma~\ref{lem-w4} we have
\begin{eqnarray*}
f(\Delta_{2n+2}^2) 
& = & f(\Delta_{2n+1}^2) f(\sigma_{2n+1} \cdots\sigma_1\sigma_1 \cdots \sigma_{2n+1}) \\
& = & (w_1^4)^{n(n-1)/2} \left(\prod_{j=1}^n\, w_j^2\right)^2 
= (w_1^4)^{n(n-1)/2} \prod_{j=1}^n\, w_j^4 \\
& = & (w_1^4)^{n(n-1)/2} (w_1^4)^{n} 
= (w_1^4)^{n(n+1)/2} .
\end{eqnarray*}
\end{proof}

\subsection{A special element of the kernel of~$f$}\label{ssec-abab}

Recall from\,\cite{Ka2} that for $n=2$ the kernel of~$f: B_6 \to \St(C_2,\BZ)$ is the normal closure of the braid
\begin{equation}\label{kerB6}
(\sigma_1 \sigma_2 \sigma_1)^2 (\sigma_1 \sigma_3^{-1} \sigma_5) (\sigma_1 \sigma_2 \sigma_1)^{-2} 
(\sigma_1 \sigma_3^{-1} \sigma_5)\, .
\end{equation}
We now exhibit a similar element of the kernel of~$f: B_{2n+2} \to \St(C_n,\BZ)$ when $n\geq 3$. 

For any $n\geq2$ set
\begin{equation}\label{def-beta}
\beta_n = \prod_{i=0}^{n} \, \sigma_{2i+1}^{(-1)^i} 
= \sigma_1 \sigma_3^{-1} \cdots \sigma_{2n+1}^{(-1)^n} \in B_{2n+2} \, .
\end{equation}
For $n= 2$, the braid $\beta_2$ is the element $\sigma_1 \sigma_3^{-1} \sigma_5$ appearing in~\eqref{kerB6}.
Since by Relations\,\eqref{braid2} the odd-numbered generators $ \sigma_{2i+1}$ commute with one another,
the product in\,\eqref{def-beta} can be taken in any order.
Observe also that the square $\beta_n^2$ of~$\beta_n$, being the product of squares of generators of the braid group, 
belongs to the pure braid group~$P_{2n+2}$.

As the $z_j$'s commute with one another and with the $y_{i,i+1}$'s, for all $n\geq 2$ we obtain
\begin{equation}\label{eq-fbeta}
f(\beta_n) = \prod_{i=1}^{n-1} y_{i,i+1}^{(-1)^{i+1}} 
= y_{1,2} \, y_{2,3}^{-1}  \cdots y_{n-1,n}^{(-1)^n} \, .
\end{equation}

Using the elements~$\Delta_k$ introduced in Section\,\ref{ssec-Delta}, we set
\begin{equation}\label{def-gamma}
\gamma_n = \prod_{i=1}^{n-1} \, \Delta_{2i+1}^2 = \Delta_3^2 \Delta_5^2 \cdots \Delta_{2n-1}^2 \in B_{2n+2} \, .
\end{equation}
We have $\gamma_2 = \Delta_3^2 = (\sigma_1\sigma_2\sigma_1)^2$. 
It follows from the centrality of each~$\Delta_k^2$ in the braid group~$B_k$ 
that the product defining~$\gamma_n$ can be taken in any order. 
Clearly, $\gamma_n$ is a pure braid for all $n \geq 2$. 
Actually, $\gamma_n \in P_{2n-1} \subset P_{2n+2}$.

By analogy with~\eqref{kerB6} we consider the element 
\begin{equation}\label{def-alpha}
\alpha_n = \gamma_n \beta_n \gamma_n^{-1} \beta_n \in B_{2n+2} \, .
\end{equation}
Note that $\alpha_n$ belongs to the pure braid group~$P_{2n+2}$ as it can be expressed as the product 
$\alpha_n = \gamma_n (\beta_n \gamma_n^{-1} \beta_n^{-1}) \beta_n^2$ of pure braids. 
Moreover, $\alpha_n$ is non-trivial since it is the product of $(\sigma_{2n+1}^2)^{(-1)^n}$ with an element of~$B_{2n}$.
Clearly, $\alpha_2$ is the braid appearing in~\eqref{kerB6}.

\begin{proposition}\label{prop-ker}
For all $n\geq 2$ we have $f(\alpha_n) = 1$ in the Steinberg group~$\St(C_n,\BZ)$.
\end{proposition}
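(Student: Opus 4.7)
The plan is to compute $f(\alpha_n) = f(\gamma_n)\, f(\beta_n)\, f(\gamma_n)^{-1}\, f(\beta_n)$ directly, using the explicit formula \eqref{eq-fbeta} for $f(\beta_n)$ and Proposition~\ref{prop-fDelta2} for the images of the $\Delta_{2i+1}^2$'s, and to reduce everything to the assertion that conjugation by $f(\gamma_n)$ inverts $f(\beta_n)$, so that the product telescopes to $1$.

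First I would compute $f(\gamma_n) = \prod_{i=1}^{n-1} f(\Delta_{2i+1}^2)$. Since the $w_i$'s pairwise commute (Section~\ref{ssec-w}) and $w_1^4$ is central (Lemma~\ref{lem-w4}), one can rearrange Proposition~\ref{prop-fDelta2} into the form
\[
f(\gamma_n) = (w_1^4)^{N} \prod_{j=1}^{n-1} w_j^{2(n-j)}
\]
for some integer $N$, the exponent $2(n-j)$ arising because the factor $w_j^2$ appears in $f(\Delta_{2i+1}^2)$ for every $i \geq j$.

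Next I would conjugate $f(\beta_n) = \prod_{k=1}^{n-1} y_{k,k+1}^{(-1)^{k+1}}$ by this expression. The central factor $(w_1^4)^N$ acts trivially. By relations~\eqref{eq-wwyww} and the commutation rules of Section~\ref{ssec-w}, the element $w_j^2$ inverts $y_{k,k+1}$ when $j \in \{k, k+1\}$ and commutes with it otherwise. Hence the conjugation of $y_{k,k+1}$ by $f(\gamma_n)$ is governed only by $w_k^{2(n-k)} w_{k+1}^{2(n-k-1)}$, and since the exponents $n-k$ and $n-k-1$ have opposite parities, exactly one of these two factors inverts $y_{k,k+1}$. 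Using that the $y_{k,k+1}$'s commute pairwise among themselves, the conjugation acts factor-by-factor on the product $f(\beta_n)$ and inverts each factor.

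Therefore $f(\gamma_n)\, f(\beta_n)\, f(\gamma_n)^{-1} = f(\beta_n)^{-1}$, whence $f(\alpha_n) = f(\beta_n)^{-1} f(\beta_n) = 1$. The main (and only non-routine) step is the parity bookkeeping that shows each $y_{k,k+1}$ is inverted exactly once; the rest is a combination of Proposition~\ref{prop-fDelta2} with the centrality and conjugation properties already established in Section~\ref{ssec-w}.
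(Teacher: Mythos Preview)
Your proposal is correct and follows essentially the same route as the paper's proof: both express the conjugation action of $f(\gamma_n)$ as that of $\prod_{j=1}^{n-1} w_j^{2(n-j)}$ (up to the central factor~$w_1^{4N}$) and then use the parity of the exponents together with~\eqref{eq-wwyww} to see that each $y_{k,k+1}$ is inverted exactly once, whence $f(\gamma_n) f(\beta_n) f(\gamma_n)^{-1} = f(\beta_n)^{-1}$. The only cosmetic difference is that the paper first discards the factors with $2(n-k)\equiv 0\pmod 4$ to reduce to $w_{n-1}^2 w_{n-3}^2\cdots$, whereas you perform the parity bookkeeping directly on the full product; the arguments are otherwise identical.
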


\begin{proof}
Since the conjugation by the central element~$w_1^4$ is trivial, it follows from Proposition~\ref{prop-fDelta2}
that the conjugation by~$f(\Delta_{2i+1}^2)$ is equal to the conjugation by~$w_1^2 w_2^2 \cdots w_i^2$.
Therefore, the conjugation by~$f(\gamma_n)$ is equal to the conjugation by~$\prod_{k=1}^{n-1} w_k^{2(n-k)}$.
In the previous product we may omit each factor whose exponent $2(n-k)$ is divisible by~$4$.
Thus the conjugation by~$f(\gamma_n)$ is equal to the conjugation by~$w_{n-1}^2 w_{n-3}^2 w_{n-5}^2\cdots$. 
Now the conjugation by $w_{n-2k+1}^2$ is non-trivial only on the factors $y_{n-2k,n-2k+1}$ and $y_{n-2k+1,n-2k+2}$ of~$f(\beta_n)$
and turns each of them into its inverse.
Therefore, $f(\gamma_n) f(\beta_n) f(\gamma_n)^{-1}  = f(\beta_n)^{-1}$,
from which one deduces the desired result.
\end{proof}

\begin{Qu}
Is there a geometric interpretation for the braid~$\alpha_n \in B_{2n+2}$, 
for instance in terms of Dehn twists?
\end{Qu}

\subsection{The kernels of~$\bar{f}$ and of~$f$}\label{ssec-kernel}

We now state our main result on these kernels.

In case $n=2$ the kernel $\Ker(f)$ is the normal closure of~$\alpha_2$
and $\Ker(\bar{f})$ is the normal closure of~$\alpha_2$  and 
$(\sigma_1 \sigma_2 \sigma_1)^4 = (\sigma_1 \sigma_2)^6 = \Delta_3^4$ (see~\cite[Th.\, 4.1 and Cor.\,4.2]{Ka2}).

We now turn to the general case.
Note that by Part\,(v) of the proof of Theorem\,\ref{thm-BtoSt},
the kernel of $\bar f$, hence also the kernel of $f$,  is contained in the pure braid group~$P_{2n+2}$.

\begin{theorem}\label{thm-kernel}
Assume $n\geq 3$.

(a) The kernel of $\bar{f}: B_{2n+2} \to \Sp_{2n}(\BZ)$ is the normal closure of the set consisting of the three braids
$\Delta_3^4$, $\Delta_5^4$ and $\alpha_n$.

(b) The kernel of $f: B_{2n+2} \to \St(C_n,\BZ)$ is the normal closure of the set consisting of
the commutator $[\sigma_3, \Delta_3^4]$ and of the elements $\Delta_5^4  \, \Delta_3^{-16}$ and $\alpha_n$.
\end{theorem}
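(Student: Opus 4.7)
The plan is to derive part (a) from part (b), then verify the easy inclusion of (b) and outline the hard converse.

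For the reduction, note that $\bar f=\pi\circ f$, that $\Ker\pi$ is the infinite cyclic group $\langle w_1^4\rangle$, and that $w_1^4=f(\Delta_3^4)$ by Proposition~\ref{prop-fDelta2}. A direct argument shows $\Ker(\bar f)=\Ker(f)\cdot\langle\Delta_3^4\rangle$: given $b\in\Ker(\bar f)$ one writes $f(b)=w_1^{4k}=f(\Delta_3^{4k})$ and obtains $b\Delta_3^{-4k}\in\Ker f$. Assuming (b), it follows that $\Ker(\bar f)$ is the normal closure of $\{[\sigma_3,\Delta_3^4],\,\Delta_5^4\Delta_3^{-16},\,\alpha_n,\,\Delta_3^4\}$; but modulo the normal closure of $\Delta_3^4$ the commutator $[\sigma_3,\Delta_3^4]$ vanishes and $\Delta_5^4\Delta_3^{-16}$ becomes $\Delta_5^4$, giving~(a).

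For the easy direction of (b), I check that the three listed elements lie in $\Ker f$: $[\sigma_3,\Delta_3^4]$ because $f(\Delta_3^4)=w_1^4$ is central by Lemma~\ref{lem-w4}; $\Delta_5^4\Delta_3^{-16}$ by Corollary~\ref{coro-fDelta4}(a); and $\alpha_n$ by Proposition~\ref{prop-ker}. For the converse, let $N$ be the normal closure of these three elements in $B_{2n+2}$ and set $G:=B_{2n+2}/N$. The homomorphism $f$ factors as $f=\tilde f\circ q$ with $\tilde f:G\twoheadrightarrow f(B_{2n+2})$, and proving $\Ker f\subseteq N$ amounts to showing that $\tilde f$ is injective. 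My strategy is to construct an inverse $g:f(B_{2n+2})\to G$ satisfying $g\circ\tilde f=\id_G$. By Theorem~\ref{thm-image} the group $f(B_{2n+2})$ fits in the short exact sequence with kernel $\St(C_n,\BZ)[2]$ and quotient $\FS_{2n+2}$, from which one can extract a presentation adapted to the standard Steinberg generators. Using Lemmas~\ref{lem-fsigma-odd} and~\ref{lem-fsigma-even}, each generator of $f(B_{2n+2})$ is realized explicitly as (the class in~$G$ of) a braid word, and the task reduces to verifying in $G$ every defining relation of that presentation.

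The principal obstacle is this last verification. The braid relations~\eqref{braid2}--\eqref{braid1} already account for all the trivial commutations of the Steinberg presentation together with those involving the $z_i$, $z'_i$ and the basic equations defining the $w_i$. The additional relations that must be established in~$G$ are precisely the ones the three relators are tailored to provide: $[\sigma_3,\Delta_3^4]=1$ supplies the centrality of $w_1^4$, which propagates to every $w_i^4$ by conjugation along $w_{\varepsilon_i-\varepsilon_j}$ as in Lemma~\ref{lem-w4}; $\Delta_5^4\Delta_3^{-16}=1$ forces $w_2^4=w_1^4$ in~$G$, from which one deduces $w_i^4=w_1^4$ for every~$i$ by the same conjugation argument; and $\alpha_n=1$ imposes the identity $f(\gamma_n)f(\beta_n)f(\gamma_n)^{-1}=f(\beta_n)^{-1}$ controlling the interaction of the $y_{i,i+1}$'s with the $w_i^2$'s, cf.~\eqref{eq-wwyww}. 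I plan to carry out the systematic check by induction on~$n$, using the case $n=2$ of \cite[Th.~4.1]{Ka2} as the base and treating the inclusion $B_{2n}\subset B_{2n+2}$ so that only the Steinberg relations involving the new top generators $z_n$, $z'_n$, $y_{n-1,n}$ must be verified at each step; these are handled by rewriting the relevant braid words modulo~$N$ using the conjugation calculus of the $w_i$'s already developed in the proofs of Propositions~\ref{prop-fDelta2} and~\ref{prop-ker}.
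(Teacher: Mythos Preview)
Your reduction of (a) to (b) is correct, but the paper proceeds in the opposite order: it proves (a) first and then deduces (b) from (a) via the split exact sequence $1\to\Ker f\to\Ker\bar f\to\langle w_1^4\rangle\to 1$. The crucial point is \emph{how} (a) is obtained. The paper does not construct an inverse to~$\tilde f$; instead it invokes the deep theorem of Brendle--Margalit--Putman~\cite{BMP}, which identifies $\Ker(\bar f)\cap B_{2n+1}$ with the hyperelliptic Torelli group and shows the latter is normally generated by $\Delta_3^4$ and~$\Delta_5^4$. The element~$\alpha_n$ then enters only to handle the passage from $P_{2n+1}$ to $P_{2n+2}$ via the semidirect-product decomposition $P_{2n+2}\cong F_{2n+1}\rtimes P_{2n+1}$.

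Your strategy has a genuine gap at the step ``from which one can extract a presentation adapted to the standard Steinberg generators.'' Knowing that $f(B_{2n+2})$ sits in an extension of $\FS_{2n+2}$ by $\St(C_n,\BZ)[2]$ does not give you a presentation unless you already have a presentation (or at least a normal generating set) for $\St(C_n,\BZ)[2]$, equivalently for the level-$2$ congruence subgroup $\Sp_{2n}(\BZ)[2]$. That is precisely the hard content supplied by~\cite{BMP}; without it you are implicitly assuming what you are trying to prove. Your induction plan does not escape this: the base case $n=2$ from~\cite{Ka2} already relies on a presentation of $\Sp_4(\BZ)$, and the inductive step would still need to control all new relations in $\Sp_{2n}(\BZ)[2]$ involving the index~$n$, which is again the [BMP] input in disguise. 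Moreover, $\alpha_n$ changes with~$n$ and does not lie in $B_{2n}$, so the normal closure~$N$ is not compatible with your proposed filtration by the subgroups~$B_{2k}$.
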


As a consequence, the finite-index subgroup $\bar{f}(B_{2n+2})$ of~$\Sp_{2n}(\BZ)$ has a presentation 
with $2n+1$ generators $\sigma_1, \sigma_2,\ldots, \sigma_{2n+1}$ 
subject to the braid relations \eqref{braid2}, \eqref{braid1} and the three additional relations
\begin{equation}\label{rel-alpha}
\Delta_3^4 = \Delta_5^4  = \alpha_n = 1.
\end{equation}

Similarly, $f(B_{2n+2})$ has a presentation with the same generators subject to the braid relations and the relations
$[\sigma_3, \Delta_3^4] = \Delta_5^4  \, \Delta_3^{-16} = \alpha_n = 1$.

\begin{proof}
(a) As we observed in the proof of Theorem~\ref{thm-image}, 
the restriction of~$\bar{f}$ to the subgroup~$B_{2n+1}$ of~$B_{2n+2}$ is the monodromy representation considered in~\cite{Ac}. 
The kernel of this restriction is the \emph{hyperelliptic Torelli group} $\Torelli_n^1$ investigated in~\cite{BMP}.
In \emph{loc.\ cit}.\ Brendle, Margalit, Putman prove that $\Torelli_n^1$ is isomorphic to a subgroup~$\Burau_{2n+1}$
of the mapping class group of the disk with $2n+1$ marked points. 
By Theorem~C of~\cite{BMP} and the comments thereafter the subgroup~$\Burau_{2n+1}$
is generated by squares of Dehn twists about curves in the disk surrounding exactly $3$ or~$5$ marked points. 
Now under the standard identification of the mapping class group with the 
braid group~$B_{2n+1}$ (see e.g.\,\cite[\S\,1.6]{KT}), the square of a Dehn twist about a curve surrounding $3$
(resp.\ $5$) marked points corresponds in the braid group to a conjugate of the element~$\Delta_3^4$ (resp.\ of~$\Delta_5^4$). 
Thus, $\Ker(\bar{f}) \cap B_{2n+1} = \Ker(\bar{f}) \cap P_{2n+1}$ is the normal closure of $\Delta_3^4$ and~$\Delta_5^4$.

To determine the whole kernel inside~$P_{2n+2}$, we use the fact that 
$P_{2n+2}$ is the semi-direct product of a normal free group and of~$P_{2n+1}$ (see \cite[\S\,1.3]{KT}).
This free group has $2n+1$~generators $A_{i,2n+2}$ ($i= 1, \ldots, 2n+1$) defined by $A_{2n+1,2n+2} = \sigma_{2n+1}^2$
and
\begin{equation}\label{def-A}
A_{i,2n+2} = (\sigma_{2n+1} \sigma_{2n} \cdots \sigma_{i+1}) \, \sigma_i^2 \,
(\sigma_{2n+1} \sigma_{2n} \cdots \sigma_{i+1})^{-1}
\end{equation}
when $1 \leq i \leq 2n$. The following expression for~$A_{i,2n+2}$ will be used in the sequel:
\begin{equation}\label{expr-A}
A_{i,2n+2} = (\sigma_{2n} \sigma_{2n-1} \cdots \sigma_i)^{-1} \, \sigma_{2n+1}^2 \, 
(\sigma_{2n} \sigma_{2n-1} \cdots \sigma_i). \quad (1 \leq i \leq 2n)
\end{equation}
To derive~\eqref{expr-A} from~\eqref{def-A} use the braid relations or draw a picture.

Recall the special element $\alpha_n = \gamma_n \beta_n \gamma_n^{-1} \beta_n$ given by~\eqref{def-alpha}.
The element $\gamma_n$ belongs to~$P_{2n}$ and $\beta_n$ to $B_{2n}\sigma_{2n+1}^{\pm1}$. 
Since $\sigma_{2n+1}$ commutes with~$B_{2n}$ we have
$\alpha_n\in B_{2n}\sigma_{2n+1}^{\pm2}\cap P_{2n+2}=P_{2n}\sigma_{2n+1}^{\pm2}$.
Consequently, $A_{2n+1,2n+2} = \sigma_{2n+1}^2$ belongs to~$P_{2n} \alpha_n^{\pm 1}$.

Let us deal with $A_{i,2n+2}$ when $1 \leq i \leq 2n$.
Since $\sigma_{2n} \sigma_{2n-1} \cdots \sigma_i$ belongs to~$B_{2n+1}$, we see from~\eqref{expr-A}
and what we established for~$\sigma_{2n+1}^2$  that $A_{i,2n+2}$ is conjugate under~$B_{2n+1}$ of 
an element of $P_{2n} \alpha_n$ or of~$P_{2n}\alpha_n^{-1}$.

So, at the cost of adding~$\alpha_n$, we have reduced $\Ker(\bar{f})$ to its intersection with~$P_{2n+1}$. 
In view of the above considerations, this completes the proof of Part~(a).

(b) Let $N$ be the normal closure of $[\sigma_3, \Delta_3^4]$, $\Delta_5^4  \, \Delta_3^{-16}$ and $\alpha_n$.
We have $N \subset \Ker(f)$: this follows from the centrality of $f(\Delta_3^4) = w_1^4$, 
from Corollary\,\ref{coro-fDelta4}\,(a) applied to $i=2$, and from Proposition\,\ref{prop-ker}.

Let us next remark that $N$ contains the commutator $[\beta, \Delta_3^4]$ for each braid~$\beta$. 
Indeed, since $\Delta_3^4$ is a product of $\sigma_1$ and $\sigma_2$, it commutes with all generators $\sigma_i$ 
and their inverses with $4 \leq i \leq 2n+1$. 
On the other hand $\Delta_3^4$ is central in~$B_3$; therefore it commutes with $\sigma_1$ and $\sigma_2$ and their inverses.
Since by definition $N$ contains $[\sigma_3, \Delta_3^4]$ and we have
$[\sigma_3^{-1}, \Delta_3^4] = \sigma_3^{-1} [\sigma_3, \Delta_3^4]^{-1} \sigma_3$, 
it contains all commutators of the form $[\sigma_i^{\pm 1}, \Delta_3^4]$. Using the commutator identities 
\[
[\beta_1 \beta_2, \Delta_3^4] = \beta_1 [\beta_2, \Delta_3^4] \, \beta_1^{-1} \, [\beta_1, \Delta_3^4]
\]
for $\beta_1, \beta_2 \in B_{2n+2}$, 
we conclude by induction on the length of expression of a braid in the generators~$\sigma_i^{\pm 1}$.

The kernels of~$f$ and of~$\bar{f}$ are connected by the short exact sequence
\begin{equation}\label{ses1}
1 \to \Ker(f) \longrightarrow \Ker(\bar{f}) \overset{f}\longrightarrow \langle w_1^4 \rangle \to 1.
\end{equation}
Indeed, clearly $\Ker(f)$ sits inside~$\Ker(\bar{f})$ as a normal subgroup. 
If $\bar{f}(\beta)= 1$, then $f(\beta)$ belongs to the kernel of~$\pi : \St(C_n,\BZ) \to \Sp_{2n}(\BZ)$, 
which we know to be  infinite cyclic generated by~$w_1^4$. 
The homomorphism $f: \Ker(\bar{f}) \to  \langle w_1^4 \rangle$ is surjective since 
$w_1^4 = f(\Delta_3^4)$ and $\bar{f}(\Delta_3^4) = 1$.
Moreover, since $\langle w_1^4 \rangle \cong \BZ$, the short exact sequence~\eqref{ses1} is split 
with splitting $\langle w_1^4 \rangle \to \Ker(\bar{f})$ given by $w_1^4 \mapsto \Delta_3^4$.

The short exact sequence~\eqref{ses1} induces the quotient short exact sequence
\begin{equation*}
1 \to \Ker(f)/N \longrightarrow \Ker(\bar{f})/N \overset{f}\longrightarrow \langle w_1^4 \rangle \to 1.
\end{equation*}
To conclude that $\Ker(f) = N$, it suffices to check that 
$f: \Ker(\bar{f})/N \to \langle w_1^4 \rangle$ is injective.

Now, by Part\,(a) each element of~$\Ker(\bar{f})$ is a product of conjugates of the braids $\Delta_3^4$, $\Delta_5^4$,
$\alpha_n$ and their inverses. Since $\alpha_n \equiv 1$ and $\Delta_5^4 \equiv (\Delta_3^4)^4$ modulo~$N$,
each element of~$\Ker(\bar{f})$ is equal modulo~$N$ to a product of conjugates of~$\Delta_3^4$ and of its inverse.
Since $[\beta, \Delta_3^4] \in N$ for any braid~$\beta$, we have $\beta \Delta_3^4 \beta^{-1} \equiv \Delta_3^4$ modulo~$N$.
Hence, $\Ker(\bar{f})/N$ is generated by~$\Delta_3^4$ mod~$N$. The image of~$\Delta_3^4$ being $w_1^4$,
the morphism $f: \Ker(\bar{f})/N \to \langle w_1^4 \rangle$ is an isomorphism.
\end{proof}

\begin{rem}\label{rem-BE}
As Benjamin Enriquez pointed out to us for $n=2$, the element~$\alpha_2$ can be rewritten as
$\alpha_2 = \Delta_3^4 \, \Delta_4^{-2} \, \sigma_5^2$.
For $n\geq 3$ let us consider the element
\begin{equation*}
\alpha'_n = \Delta_3^{2n(n-1)} \, \Delta_{2n}^{-2} \, \sigma_{2n+1}^2 \in B_{2n+2} .
\end{equation*}
Using the computations of Section~\ref{ssec-Delta}, it is easy to check that $\alpha'_n$
belongs to the kernel of~$f$. 
By its very definition $\alpha'_n$ is in $P_{2n}\sigma_{2n+1}^{\pm2}$, 
which is an additional feature it shares with~$\alpha_n$.
Reasoning as in the previous proof, we deduce that 
Theorem~\ref{thm-kernel} also holds with~$\alpha'_n$ instead of~$\alpha_n$.
Accordingly, the relation $\alpha_n= 1$ of~\eqref{rel-alpha} can be replaced by 
the relation $\sigma_{2n+1}^2 = \Delta_{2n}^2$.
\end{rem}

\subsection{Restriction to~$B_{2n+1}$}\label{ssec-restrict}

We conclude this section with a result which will be used in Section~\ref{ssec-ker-hat}.
We denote by $f_{2n+1}: B_{2n+1} \to \St(C_n,\BZ)$ (resp.\ $\bar f_{2n+1}: B_{2n+1} \to \Sp_{2n}(\BZ)$)
the restriction of $f$  (resp.\ of~$\bar f$) to~$B_{2n+1}$.

\begin{proposition}\label{prop-ker(f_{2n+1})}
The kernel of $f_{2n+1}: B_{2n+1} \to \St(C_n,\BZ)$ is the normal closure of $[\sigma_3,\Delta_3^4]$ 
and $\Delta_5^4\Delta_3^{-16}$.
\end{proposition}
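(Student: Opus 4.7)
The plan is to run the argument of Theorem~\ref{thm-kernel}(b) verbatim, restricted to $B_{2n+1}$. The simplification is that the special element~$\alpha_n$, which in the proof of Theorem~\ref{thm-kernel} was needed only to account for the free factor generated by the elements $A_{i,2n+2}$ in the semidirect product decomposition $P_{2n+2} = F \rtimes P_{2n+1}$, is no longer required. The key input already available is that, by the proof of Theorem~\ref{thm-kernel}(a) (which invokes Brendle--Margalit--Putman~\cite{BMP}),
\[
\Ker(\bar f_{2n+1}) = \Ker(\bar f)\cap B_{2n+1}
\]
is precisely the normal closure in~$B_{2n+1}$ of $\Delta_3^4$ and $\Delta_5^4$. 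Since $\Ker(\pi) = \langle w_1^4 \rangle$ and $f(\Delta_3^4) = w_1^4$, one has the split short exact sequence
\[
1 \to \Ker(f_{2n+1}) \longrightarrow \Ker(\bar f_{2n+1}) \overset{f}{\longrightarrow} \langle w_1^4 \rangle \to 1.
\]

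Let $N'$ denote the normal closure in~$B_{2n+1}$ of $[\sigma_3, \Delta_3^4]$ and $\Delta_5^4\Delta_3^{-16}$. The inclusion $N' \subseteq \Ker(f_{2n+1})$ follows at once: $f(\Delta_3^4) = w_1^4$ is central in $\St(C_n,\BZ)$, and Corollary~\ref{coro-fDelta4}(a) applied with $i=2$ gives $f(\Delta_5^4) = w_1^{16} = f(\Delta_3^{16})$.

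For the reverse inclusion, I would first show that $[\beta, \Delta_3^4] \in N'$ for every $\beta \in B_{2n+1}$. Using the commutator identity $[\beta_1\beta_2, \Delta_3^4] = \beta_1[\beta_2,\Delta_3^4]\beta_1^{-1}[\beta_1,\Delta_3^4]$ and induction on word length, it suffices to check each generator $\sigma_i^{\pm1}$ of~$B_{2n+1}$. The generators $\sigma_1, \sigma_2$ commute with $\Delta_3^4$ because $\Delta_3^4$ is central in~$B_3$; the generators $\sigma_i$ with $i\geq 4$ commute with $\Delta_3^4$ because $\Delta_3^4$ is a word in $\sigma_1,\sigma_2$ only. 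The single non-trivial case $i=3$ is, by definition, already in~$N'$.

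Concluding exactly as in Theorem~\ref{thm-kernel}(b): modulo~$N'$ every element of $\Ker(\bar f_{2n+1})$ is a product of conjugates of $\Delta_3^{\pm4}$ and $\Delta_5^{\pm4}$; since $\Delta_5^4 \equiv \Delta_3^{16}\pmod{N'}$ and $\beta\Delta_3^4\beta^{-1} \equiv \Delta_3^4 \pmod{N'}$ for every $\beta$, the quotient $\Ker(\bar f_{2n+1})/N'$ is cyclic generated by $\Delta_3^4$. The induced map to $\langle w_1^4 \rangle \cong \BZ$ sends the generator to $w_1^4$ and is therefore an isomorphism, whence $\Ker(f_{2n+1}) = N'$. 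The only non-routine step is the appeal to Brendle--Margalit--Putman to describe $\Ker(\bar f_{2n+1})$; but since this has already been used in Theorem~\ref{thm-kernel}(a), no new difficulty arises and the proposition follows by a formal rerun of the earlier argument minus the $\alpha_n$ bookkeeping.
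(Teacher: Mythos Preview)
Your proposal is correct and follows essentially the same approach as the paper's proof: both use the description of $\Ker(\bar f_{2n+1})$ from \cite{BMP} (already invoked in Theorem~\ref{thm-kernel}(a)), establish the centrality of~$\Delta_3^4$ modulo~$N'$, and conclude via the short exact sequence comparing $\Ker(f_{2n+1})$ and $\Ker(\bar f_{2n+1})$. The paper's proof is terser and simply refers back to the argument of Theorem~\ref{thm-kernel}(b), whereas you spell out the details, but there is no substantive difference.
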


\begin{proof}
The normal closure~$N'$ of the proposition sits inside $\Ker f_{2n+1}$.
By~\cite{BMP} any element of $\Ker \bar f_{2n+1}$
is a product of conjugates of $\Delta_3^{\pm4}$ and $\Delta_5^{\pm4}$. We
have $\Delta_5^4\equiv \Delta_3^{16}$ modulo~$N'$. 
Reasoning as in the proof of Theorem~\ref{thm-kernel}~(b), we conclude that 
$\Delta_3^4$ is central modulo~$N'$.
As above, there is a short  exact sequence
\begin{equation*}
1 \to \Ker f_{2n+1} \longrightarrow \Ker(\bar{f}_{2n+1}) \longrightarrow \langle w_1^4 \rangle \to 1.
\end{equation*}
The induced quotient short exact sequence is
\begin{equation*}
1 \to \Ker f_{2n+1}/N' \longrightarrow \Ker(\bar{f}_{2n+1})/N' \longrightarrow \langle w_1^4 \rangle \to 1.
\end{equation*}
As in the proof of Theorem \ref{thm-kernel}\,(b), the map onto~$\langle w_1^4 \rangle$ is bijective
and we deduce that the kernel of~$f_{2n+1}$ is~$N'$. 
\end{proof}

\section{Extending $f$ to an epimorphism}\label{sec-epi}

By Theorem~\ref{thm-BtoSt} the homomorphism $f: B_{2n+2} \to \St(C_n,\BZ)$ 
is not surjective when $n\geq 3$. 
We shall now extend $f$ to a surjective homomorphism $\widehat{f} :  \widehat{B}_{2n+2} \to \St(C_n,\BZ)$, 
which we define in the following subsection.
Let us assume that $n$ is a fixed integer $\geq 3$.

\subsection{The Artin groups~$\widehat{B}_k$}\label{ssec-Artin}

For $k\geq 5$ let $\Gamma_k$ be the graph with $k$ vertices labeled $0, 1, \ldots, k-1$ and with unique edges
between the vertices labeled $i$ and $i+1$ for $i \in \{1,\ldots, k-2\}$ plus a unique edge between the vertices labeled $0$
and~$4$.
\begin{equation*}
\nnode1\edge\nnode2\edge\nnode3\edge\vertbar0{\nnode4}\edge\nnode5\halfedge\cdots\halfedge\nnode{$k-2$}\edge\nnode {$k-1$}
\end{equation*}
\begin{center}
\emph{The graph $\Gamma_k$}
\end{center}
Let $\widehat{B}_k$ be the Artin group (also called Artin--Tits group or generalized braid group) 
associated with the graph~$\Gamma_k$; see~\cite{BS, De} or \cite[Sect.~6.6]{KT}. 
It has a presentation with generators $\sigma_0, \sigma_1, \ldots, \sigma_{k-1}$
and with the standard braid relations \eqref{braid2} and \eqref{braid1} between $\sigma_1, \ldots, \sigma_{k-1}$ 
together with the following additional relations involving the generator~$\sigma_0$:
\begin{equation}\label{ref-sigma0}
\sigma_0 \sigma_4 \sigma_0  = \sigma_4 \sigma_0 \sigma_4 
\qquad\text{and} \qquad
\sigma_0 \sigma_i = \sigma_i \sigma_0
\quad (i\neq 4).
\end{equation}

The corresponding Coxeter groups are infinite unless $5 \leq k \leq 7$. 
They are finite of type~$A_5$ if $k=5$, of type~$D_6$ if $k=6$, and of type~$E_7$ if $k=7$
(it is affine of type~$\widetilde{E}_7$ if $k=8$). 

In the same way as there are natural homomorphisms $B_k \to B_{k+1}$, there are natural (injective)
homomorphisms $\widehat{B}_k \to \widehat{B}_{k+1}$ and $j: B_k \to \widehat{B}_k$.
The latter is defined by $j(\sigma_i) = \sigma_i$ for all $i \in \{1,\ldots, k-1\}$.

Here is the reason why we introduce the Artin groups~$\widehat{B}_k$.

\begin{theorem}\label{thm-St-newgen} 
There exists a unique homomorphism 
$\widehat{f}: \widehat{B}_{2n+2} \to \St(C_n,\BZ)$ such that 
$\widehat{f}(\sigma_0) = z_2 $
and $\widehat{f}(\sigma_i) = f(\sigma_i)$ for all $i=1,\ldots, 2n+1$.
The restriction of the homomorphism~$\widehat{f}$ to $\widehat B_{2n+1}$ is surjective.
\end{theorem}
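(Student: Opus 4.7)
The plan is to split the argument into two independent parts: the existence and uniqueness of $\widehat{f}$, and the surjectivity of its restriction to $\widehat{B}_{2n+1}$.

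For existence and uniqueness, uniqueness is automatic since the value of $\widehat{f}$ is prescribed on every generator. For existence one has to verify the defining relations of $\widehat{B}_{2n+2}$ that are not already guaranteed by $f$, namely the commutations $\sigma_0\sigma_i=\sigma_i\sigma_0$ for $i\neq 4$ and the braid relation $\sigma_0\sigma_4\sigma_0=\sigma_4\sigma_0\sigma_4$. The commutations reduce to showing that $z_2$ commutes with each of $z_1$, $z'_1$, $z_1z_2y_{1,2}^{-1}$, $z_2z_3y_{2,3}^{-1}$ and every $z_j, z'_j, y_{j,j+1}$ for $j\neq 2$; this is immediate from the trivial commutation relations of the Steinberg presentation (in particular, every $y_{j,k}$ commutes with every $z_\ell$). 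The braid relation reduces to $z_2z_2'^{-1}z_2=z_2'^{-1}z_2z_2'^{-1}$, that is, $w_{2\varepsilon_2}=w_{-2\varepsilon_2}^{-1}$, which is \eqref{eq-ww}.

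For surjectivity, let $H=\widehat{f}(\widehat{B}_{2n+1})$. I would show by induction on $i$ that $H$ contains all generators $z_i, z'_i, x_{i,j}, y_{i,j}, y'_{i,j}$ of $\St(C_n,\BZ)$. At the outset $H$ contains $z_1$, $z_2$, every $z'_j$, and every $f(\sigma_{2i+1})=z_iz_{i+1}y_{i,i+1}^{-1}$. Pairwise commutativity of $z_1,z_2,y_{1,2}$ extracts $y_{1,2}\in H$; then \eqref{eq-St9} applied to $[y_{1,2},z'_1]$ and $[y_{2,1},z'_2]$ yields $x_{2,1}, x_{1,2}\in H$, and either \eqref{eq-St8} or the conjugation $w_1y'_{1,2}w_1^{-1}=x_{1,2}$ from \eqref{eq-wyw2} delivers $y'_{1,2}$. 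This establishes the base case $i=2$ together with $w_1,w_2\in H$.

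The inductive step, which I expect to be the technical heart, passes from index $i\geq 2$ to $i+1$. From $f(\sigma_{2i+1})$ and $z_i\in H$ one obtains $z_{i+1}y_{i,i+1}^{-1}\in H$. Taking its commutator with $x_{i-1,i}\in H$, and using that $x_{i-1,i}$ commutes with $z_{i+1}$ while $[x_{i-1,i},y_{i,i+1}]=y_{i-1,i+1}$ by \eqref{eq-xyy}, yields $y_{i-1,i+1}\in H$. Two applications of \eqref{eq-St9} then produce $x_{i-1,i+1}\in H$ and $x_{i+1,i-1}z_{i+1}^{-1}\in H$ (using $z_{i-1}, z'_{i-1}, z'_{i+1}\in H$). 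Next, $[x_{i,i-1},y_{i-1,i+1}]=y_{i,i+1}$ by \eqref{eq-xyy} gives $y_{i,i+1}\in H$, whence \eqref{eq-St9} delivers $x_{i,i+1}\in H$. The commutator $[x_{i+1,i-1}z_{i+1}^{-1}, x_{i-1,i}]$ collapses, since $z_{i+1}$ commutes with $x_{i-1,i}$, to $[x_{i+1,i-1},x_{i-1,i}]=x_{i+1,i}$ by \eqref{eq-xxx}, so $x_{i+1,i}\in H$; then $[x_{i+1,i},x_{i,i-1}]=x_{i+1,i-1}$ by \eqref{eq-xxx}, and comparison with $x_{i+1,i-1}z_{i+1}^{-1}$ isolates $z_{i+1}\in H$. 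With $z_{i+1}$ and $w_{i+1}=z_{i+1}z_{i+1}'^{-1}z_{i+1}$ now in $H$, all remaining generators with index $i+1$ follow routinely from \eqref{eq-St8}--\eqref{eq-St10}, \eqref{wxw} and commutators with the already available $x_{j,k}$'s, completing the induction.

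The main obstacle is precisely the separation of $z_{i+1}$ from $y_{i,i+1}$ in the single coupled datum $z_{i+1}y_{i,i+1}^{-1}$ supplied by $f(\sigma_{2i+1})$: conjugation by $w_i$ alone only reshuffles this coupling into $z_{i+1}x_{i+1,i}^{-1}$, since $w_i$ fixes $z_{i+1}$ and sends $y_{i,i+1}$ to $x_{i+1,i}$. The essential new ingredient is the off-diagonal root $\varepsilon_{i-1}+\varepsilon_{i+1}$, accessed through a commutator with $x_{i-1,i}$; this is why the induction cannot start before $i=2$ and why its hypothesis must carry the full complement of $x_{j,k}$'s, not merely the diagonal elements $z_j$.
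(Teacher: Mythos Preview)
Your proof is correct, and Part~(a) coincides with the paper's argument. For Part~(b), however, you take a genuinely different route. The paper first observes that $H=\widehat f(\widehat B_{2n+1})$ contains $\St(C_n,\BZ)[2]$ (by Theorem~\ref{thm-image}), so that it suffices to show the image of $H$ in the finite quotient $\Sp_{2n}(\BF_2)$ is everything; it then proves this by induction on~$n$, with a dedicated lemma handling the base case $n=3$ via commutator calculations over~$\BF_2$ (where each generator is its own inverse). Your argument, by contrast, works entirely inside $\St(C_n,\BZ)$: you run an internal induction on the index~$i$ and extract $z_{i+1}$ from the coupled element $z_{i+1}y_{i,i+1}^{-1}$ by first producing $y_{i-1,i+1}$ via a commutator with $x_{i-1,i}$, then recovering $y_{i,i+1}$ via $[x_{i,i-1},y_{i-1,i+1}]$. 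Your approach is more elementary and self-contained, since it avoids both the reduction modulo~$2$ and the appeal to Theorem~\ref{thm-image} (hence to A'Campo's theorem); the paper's approach, on the other hand, exploits the simplifications available over~$\BF_2$ and packages the induction more structurally as two overlapping copies of $\Sp_{2n}(\BF_2)$ inside $\Sp_{2n+2}(\BF_2)$. Incidentally, your inductive step can be shortened: once $y_{i,i+1}\in H$, you already have $z_{i+1}=(z_{i+1}y_{i,i+1}^{-1})\,y_{i,i+1}\in H$, making the subsequent detour through $x_{i+1,i-1}$ and $x_{i+1,i}$ unnecessary for isolating~$z_{i+1}$ (though those elements are still needed to close the induction).
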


Consequently, the homomorphism $\widehat{f}: \widehat{B}_{2n+2} \to \St(C_n,\BZ)$ is surjective as well.
It extends the non-surjective homomorphism $f: B_{2n+2} \to \St(C_n,\BZ)$
in the sense that $f = \widehat{f} \circ j$.

\begin{proof}
(a) Let us first check that $\widehat{f}$ is well-defined. Since it extends~$f$, we have only to deal with the 
relations~\eqref{ref-sigma0} involving the additional generator~$\sigma_0$.
Now $\widehat{f}(\sigma_0)$ commutes with $\widehat{f}(\sigma_i)  = f(\sigma_i)$ for all $i\neq 4$ 
in view of the defining relations of the Steinberg group (see Section\,\ref{ssec-St}).
The image under~$\widehat{f}$ of the relation $\sigma_0 \sigma_4 \sigma_0  = \sigma_4 \sigma_0 \sigma_4 $
in~\eqref{ref-sigma0} is equivalent to 
$z_2 z_2^{\prime-1} z_2 = z_2^{\prime-1} z_2 z_2^{\prime-1}$, 
which holds in~$\St(C_n,\BZ)$, as explained in the proof of Theorem~\ref{thm-BtoSt}.

(b) By definition of~$\widehat{f}$ it suffices to establish that the image $f(B_{2n+1})$ of~$f$ 
together with~$z_2$ generates the Steinberg group~$\St(C_n,\BZ)$. 

To this end, it is sufficient to prove that the images of~$\bar f(B_{2n+1})$ and~$z_2$ in~$\Sp_{2n}(\BF_2)$
generate the latter finite group.
Indeed, by Theorem~\ref{thm-image} the subgroup~$H$ of $\St(C_n,\BZ)$ generated by~$f(B_{2n+1})$ and~$z_2$ 
contains the kernel $\St(C_n,\BZ)[2]$ of the surjective morphism
$\St(C_n,\BZ)\xrightarrow{\pi} \Sp_{2n}(\BZ) \rightarrow \Sp_{2n}(\BF_2)$.
Hence, if the image of $H$ is the whole group $\Sp_{2n}(\BF_2)$, we obtain the equality $H = \St(C_n,\BZ)$.

Let us denote the images in~$\Sp_{2n}(\BF_2)$ of the generators $x_{i,j}$, $y_{i,j}$, $y'_{i,j}$, $z_i$, $z'_i$ of~$\St(C_n,\BZ)$
by $\bar{x}_{i,j}$, $\bar{y}_{i,j}$, $\bar{y}'_{i,j}$, $\bar{z}_i$, $\bar{z}'_i$ respectively.
Note that the latter generate~$\Sp_{2n}(\BF_2)$ and that each of them is of order~$2$, hence equal to its inverse.

Define $E_n$ to be the set of the images 
of $\{\bar f(\sigma_1),\bar f(\sigma_2),\ldots,\bar f (\sigma_{2n})\}$ in $\Sp_{2n}(\BF_2)$. We have
\[
E_n = \left\{\bar z_1, \bar z'_1, \bar z_1\bar z_2\bar y_{1,2},\bar z'_2, \bar z_2\bar z_3\bar y_{2,3},\bar z'_3,\ldots , \bar z'_n \right\}.
\] 

In order to prove Theorem~\ref{thm-St-newgen}, it is sufficient to check that
$E_n\cup\{\bar z_2\}$ generates $\Sp_{2n}(\BF_2)$. 
We will establish this assertion by induction on~$n$. 

Note that the assertion holds for $n=2$: indeed, $z_2 = \bar{f}(\sigma_5)$ so that 
the set $E_2 \cup \{ \bar z_2 \}$ generates the image of~$\bar{f}$, which by
the surjectivity result of Theorem~\ref{thm-BtoSt} is equal to the whole group~$\Sp_4(\BF_2)$.

We now prove the assertion for $n=3$.

\begin{lemma}\label{lem-sp6}
The set $E_3 \cup \{ \bar z_2 \}$ generates $\Sp_6(\BF_2)$.
\end{lemma}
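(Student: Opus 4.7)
The plan is to derive, from the set $E_3 \cup \{\bar z_2\}$, every standard Steinberg generator $\bar z_i$, $\bar z'_i$, $\bar x_{i,j}$, $\bar y_{i,j}$, $\bar y'_{i,j}$ of $\Sp_6(\BF_2)$, since these images are known to generate the group.

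Five of the simple generators are directly present: $\bar z_1$, $\bar z'_1$, $\bar z_2$, $\bar z'_2$ and $\bar z'_3$. Because $z_1$, $z_2$ and $y_{1,2}$ pairwise commute in $\St(C_n,\BZ)$ (none of the three pairs appears in the non-commuting list of Section~\ref{ssec-St}) and each element under consideration has order two in $\Sp_6(\BF_2)$, left multiplication by $\bar z_1 \bar z_2$ turns $\bar z_1 \bar z_2 \bar y_{1,2}$ into $\bar y_{1,2}$, while multiplying $\bar z_2 \bar z_3 \bar y_{2,3}$ by $\bar z_2$ yields $\bar z_3 \bar y_{2,3}$. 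Two Chevalley commutators then provide two further generators: relation~\eqref{eq-St9} with $(i,j)=(1,2)$ gives $\bar x_{2,1} = [\bar y_{1,2}, \bar z'_1] \cdot \bar z_2$, and relation~\eqref{eq-St8} with $(i,j)=(2,1)$ then gives $\bar y'_{1,2} = \bar z'_1 \cdot [\bar x_{2,1}, \bar z'_2]$.

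The main obstacle is to extract $\bar z_3$ from the composite $\bar z_3 \bar y_{2,3}$, since it is the only simple $\bar z$-generator still missing. To surmount it, I would introduce the Weyl-type element $\bar w := \bar y_{1,2}\, \bar y_{1,2}^{\prime -1}\, \bar y_{1,2}$, the image of $w_{\varepsilon_1+\varepsilon_2}$, which now lies in our subgroup. Since the reflection $s_{\varepsilon_1+\varepsilon_2}$ fixes $2\varepsilon_3$ and maps $\varepsilon_2+\varepsilon_3$ to $\varepsilon_3-\varepsilon_1$, formula~\eqref{wxw} gives $\bar w \,(\bar z_3\bar y_{2,3})\, \bar w^{-1} = \bar z_3\, \bar x_{3,1}$ (the sign $c=\pm 1$ of~\eqref{wxw} is immaterial in characteristic two). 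Separately, since $\bar z_3$ commutes with both $\bar y_{2,3}$ and $\bar y'_{1,2}$, the commutator $[\bar z_3 \bar y_{2,3}, \bar y'_{1,2}]$ reduces to $[\bar y_{2,3}, \bar y'_{1,2}]$, which by~\eqref{eq-yyprime} equals $\bar x_{3,1}$. Multiplying $\bar z_3 \bar x_{3,1}$ by $\bar x_{3,1}$ then isolates $\bar z_3$.

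With $\bar z_3$ in hand, $\bar y_{2,3}$ splits off from $\bar z_3 \bar y_{2,3}$, and the remaining generators $\bar x_{1,2}$, $\bar x_{2,3}$, $\bar x_{3,2}$, $\bar y_{1,3}$, $\bar x_{1,3}$, $\bar y'_{1,3}$, $\bar y'_{2,3}$ are produced by routine commutators of the elements now available, invoking relations~\eqref{eq-xyy}, \eqref{eq-xxx}, \eqref{eq-St8} and~\eqref{eq-St9}. The subgroup generated by $E_3 \cup \{\bar z_2\}$ therefore contains a full set of standard generators of $\Sp_6(\BF_2)$, hence equals the whole group.
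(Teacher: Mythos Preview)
Your proof is correct but follows a different path from the paper's. Both arguments begin by extracting $\bar y_{1,2}$ and $\bar z_3\bar y_{2,3}$ from the given set and identify the isolation of $\bar z_3$ as the crucial step. The paper proceeds entirely by Chevalley commutators: from $[\bar y_{1,2},\bar z'_2]=\bar x_{1,2}\bar z_1$ it obtains $\bar x_{1,2}$, then $\bar y_{1,3}=[\bar x_{1,2},\bar y_{2,3}]$, then $\bar x_{3,1}\bar z_3=[\bar y_{1,3},\bar z'_1]$, and after two more commutators $\bar x_{3,1}=[\bar x_{3,2},\bar x_{2,1}]$, whence $\bar z_3$. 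You instead first produce $\bar y'_{1,2}$ and use it in two ways: to form the Weyl element $\bar w=w_{\varepsilon_1+\varepsilon_2}$, whose conjugation action~\eqref{wxw} turns $\bar z_3\bar y_{2,3}$ into $\bar z_3\bar x_{3,1}$, and to compute $\bar x_{3,1}=[\bar y_{2,3},\bar y'_{1,2}]$ directly via~\eqref{eq-yyprime}. Your route is a bit more conceptual, exploiting the Weyl group action, while the paper's stays within the commutator calculus and never touches any $\bar y'_{i,j}$; it also ends slightly more economically by invoking the remark of Section~\ref{ssec-St} that the $\bar y_{i,j}$, $\bar z_i$, $\bar z'_i$ already generate, rather than listing the remaining $\bar x$'s and $\bar y'$'s.
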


\begin{proof}
Let $G_3$ be the subgroup of~$\Sp_6(\BF_2)$ generated by 
\[
E_3 \cup \{\bar z_2\} 
=\{ \bar z_1, \bar z'_1, \bar z_1\bar z_2\bar y_{1,2},\bar z'_2, \bar z_2\bar z_3\bar y_{2,3},\bar z'_3,\bar z_2\} .
\]
Obviously, $G_3$ contains $\bar y_{1,2}$ and $\bar z_3 \bar y_{2,3}$.
We have the following relations deduced 
from Equations~\eqref{eq-xxx}, \eqref{eq-xyy}, \eqref{eq-xz} and~\eqref{eq-St9}:
\[
[\bar y_{1,2},\bar z'_2] = \bar x_{1,2}\bar z_1 \, ,\;\; \text{whence} \; \bar x_{1,2}\in G_3 \, ;
\]
\[
[\bar x_{1,2},\bar z_3\bar y_{2,3}]= [\bar x_{1,2},\bar y_{2,3}]=\bar y_{1,3} \, ,\;\; \text{whence} \; \bar y_{1,3}\in G_3 \, ;
\]
\[
[\bar y_{1,3},\bar z'_1]=\bar x_{3,1}\bar z_3 \, ,\;\; \text{whence} \; \bar x_{3,1}\bar z_3\in G_3 \, ;
\]
\[
[\bar y_{1,2},\bar z'_1]=\bar x_{2,1}\bar z_2 \, ,\;\; \text{whence} \; \bar x_{2,1} \in G_3 \, ;
\]
\[
[\bar y_{1,3},\bar z'_3]=\bar x_{1,3} \bar z_1 \, ,\;\; \text{whence} \; \bar x_{1,3} \in G_3 \, ;
\]
\[
[\bar x_{3,1}\bar z_3,\bar x_{1,2}]=[\bar x_{3,1},\bar x_{1,2}]=\bar x_{3,2} \, ,\;\; \text{whence} \; \bar x_{3,2} \in G_3 \, ;
\]
\[
[\bar x_{3,2},\bar x_{2,1}]=\bar x_{3,1} \, ,\;\; \text{whence} \; \bar x_{3,1} \in G_3 \, .
\]
Since $\bar x_{3,1}$ and $\bar x_{3,1}\bar z_3$ belong to~$G_3$, so does~$\bar z_3$.
Now we know that all $\bar y_{i,j}$, $\bar z_i$ and $\bar z'_i$ with $i,j \in \{1,2,3\}$ and $i\neq j$ belong to~$G_3$. 
As follows from a remark in Section~\ref{ssec-St}, these elements generate the whole group~$\Sp_6(\BF_2)$.
\end{proof}

We resume Part~(b) of the proof of Theorem~\ref{thm-St-newgen}.
Assume that the assertion above holds for $n\geq 3$ and let us prove it for~$n+1$.

Let $G_{n+1}$ be the subgroup of $\Sp_{2n+2}(\BF_2)$ generated by $E_{n+1}\cup \{\bar z_2\}$.
By the induction hypothesis, since $E_n$ is a subset of~$E_{n+1}$, 
the group~$G_{n+1}$ contains $\Sp_{2n}(\BF_2)$, viewed as the group of matrices in~$\Sp_{2n+2}(\BF_2)$
with entries equal to~$0$ for the indices $(i,n+1)$ and $(n+1,i)$ with $i\neq n+1$ and 
for the indices $(i,2n+2)$ and $(2n+2,i)$ with $i\neq 2n+2$. 
In particular, $G_{n+1}$ contains~$\bar z_3$ by Lemma~\ref{lem-sp6}. 

Now consider the subgroup of~$G_{n+1}$ generated by
\[
\{\bar z_2, \bar z'_2, \bar z_2\bar z_3\bar y_{2,3},\bar z'_3, \bar z_3\bar z_4\bar y_{3,4},\bar z'_4,\ldots , \bar z'_{n+1}\}\cup\{\bar z_3\}.
\] 
Since $\bar z_3\in G_{n+1}$, this subgroup is a subgroup of~$G_{n+1}$. 

By the induction hypothesis applied to all subscripts increased by~$1$, this subgroup
is isomorphic to the symplectic group~$\Sp_{2n}(\BF_2)$, now viewed
as the group of matrices in~$\Sp_{2n+2}(\BF_2)$
with entries equal to~$0$ for the indices $(i,1)$ and $(1,i)$ with $i\neq1$
and for the indices $(i,n+2)$ and $(n+2,i)$ with $i\neq n+2$. 

It follows that all generators $\bar{x}_{i,j}$, $\bar{y}_{i,j}$, $\bar{y}'_{i,j}$, $\bar{z}_i$, $\bar{z}'_i$ of~$\Sp_{2n+2}(\BF_2)$
belong to~$G_{n+1}$, except possibly $\bar{x}_{1,n+1}$, $\bar{x}_{n+1,1}$, $\bar{y}_{1,n+1}$ and $\bar{y}'_{1,n+1}$.
But the latter also belong to~$G_{n+1}$ in view of the commutator relations
\begin{equation*}
[\bar x_{1,n},\bar x_{n,n+1}] = \bar x_{1,n+1}\, , \quad
[\bar x_{n+1,n},\bar x_{n,1}] = \bar x_{n+1,1}\, ,
\end{equation*}
\begin{equation*}
[\bar x_{1,n},\bar y_{n,n+1}] = \bar y_{1,n+1} \, , \quad
[\bar x_{n,1},\bar y'_{n,n+1}] = \bar y'_{1,n+1}\, ,
\end{equation*}
which follow from Relations~\eqref{eq-xxx}--\eqref{eq-xyprime}.
\end{proof}

\subsection{Elements of the kernel of~$\widehat f$}\label{ssec-ker-hat}

Let $\widehat f_{2n+1}:\widehat B_{2n+1}\rightarrow \St(C_n,\BZ)$ be the restriction of $\widehat f$ to~$\widehat B_{2n+1}$.
Recall the restrictions $f_{2n+1}: B_{2n+1} \to \St(C_n,\BZ)$ and $\bar f_{2n+1}: B_{2n+1} \to \Sp_{2n}(\BZ)$ 
defined in Section~\ref{ssec-restrict}.

We consider the element 
\begin{equation}\label{def-alpha0}
\alpha_0=(\sigma_1\sigma_2\sigma_1)^2(\sigma_1\sigma_3^{-1}\sigma_0)
(\sigma_1\sigma_2\sigma_1)^{-2}(\sigma_1\sigma_3^{-1}\sigma_0)
\end{equation}
of the Artin group $\widehat{B}_{2n+1}$ of type $\Gamma_{2n+1}$.
This element lies in the kernel of $\widehat f_{2n+1}$ since
$\widehat f_{2n+1}(\alpha_0) = w_1^2 y_{1,2} w_1^{-2} y_{1,2} = y_{1,2}^{-2} y_{1,2} = 1$.

Note that mapping $\sigma_0$ to~$\sigma_5$ and $\sigma_i$ to~$\sigma_i$ for $i=1,\ldots, 4$, 
we obtain an isomorphism~$j$ from the subgroup~$\widehat B_5$ of~$\widehat B_{2n+1}$
generated by $\{\sigma_1,\sigma_2,\sigma_3,\sigma_4,\sigma_0\}$ to the standard braid group~$B_6$.
The composed morphism
\[
\langle \sigma_1,\sigma_2,\sigma_3,\sigma_4,\sigma_0 \rangle \overset{j}\longrightarrow
B_6 \overset{f}\longrightarrow \St(C_2,\BZ) 
\]
is equal to the restriction of~$\widehat f_{2n+1}$ and $j$~maps~$\alpha_0$ 
to~$\alpha_2$, which  by Theorem~\ref{prop-ker} lies in the kernel of $f: B_6 \to \St(C_2,\BZ)$. 
We thus recover the fact that $\alpha_0$~belongs to~$\Ker (\widehat f_{2n+1})$.

\begin{theorem}\label{th-ker(hat f_{2n+1})}
The kernel of the restriction of $\widehat f_{2n+1}$ to the pure Artin group~$\widehat P_{2n+1}$ of type~$\Gamma_{2n+1}$ 
is the normal closure of $\alpha_0$, $[\sigma_3,\Delta_3^4]$ and $\Delta_5^4\Delta_3^{-16}$ in~$\widehat B_{2n+1}$.
\end{theorem}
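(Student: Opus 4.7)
Let $N$ be the normal closure in $\widehat{B}_{2n+1}$ of $\{\alpha_0,\,[\sigma_3,\Delta_3^4],\,\Delta_5^4\Delta_3^{-16}\}$ and $K=\ker(\widehat{f}_{2n+1}|_{\widehat{P}_{2n+1}})$. The easy direction $N\subseteq K$ is immediate: the vanishing $\widehat{f}_{2n+1}(\alpha_0)=w_1^2y_{1,2}w_1^{-2}y_{1,2}=y_{1,2}^{-1}y_{1,2}=1$ uses~\eqref{eq-wwyww} and is already displayed just before the statement, while $[\sigma_3,\Delta_3^4]$ and $\Delta_5^4\Delta_3^{-16}$ lie in $B_{2n+1}\cap\ker f_{2n+1}$ by Proposition~\ref{prop-ker(f_{2n+1})} and Corollary~\ref{coro-fDelta4}(a); each of the three generators is pure, since it maps to the identity in the Coxeter quotient~$\widehat{W}_{2n+1}$. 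Exploiting the factorisation $\alpha_0=A(u\sigma_0)A^{-1}(u\sigma_0)\in N$ with $A=(\sigma_1\sigma_2\sigma_1)^2$ and $u=\sigma_1\sigma_3^{-1}$, together with the commutation of $\sigma_0$ with $\sigma_1,\sigma_2,\sigma_3$, I obtain the key congruence $\sigma_0^2\equiv W\pmod{N}$, where $W:=u^{-1}Au^{-1}A^{-1}$ is an element of $P_{2n+1}$ (its image in $\FS_{2n+1}$ is trivial).

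For the converse $K\subseteq N$, let $\tilde N$ denote the normal closure in $B_{2n+1}$ of the last two generators of~$N$; by Proposition~\ref{prop-ker(f_{2n+1})} we have $\tilde N=\ker f_{2n+1}$, and Theorem~\ref{thm-image} together with the factorisation of the reduction modulo~$2$ through $\widehat{W}_{2n+1}$ yields $\widehat{f}_{2n+1}(\widehat{P}_{2n+1})=\St(C_n,\BZ)[2]$. From $P_{2n+1}\cap N\subseteq P_{2n+1}\cap K=\tilde N\subseteq N$ one deduces $P_{2n+1}\cap N=\tilde N$, so the induced map $\phi\colon P_{2n+1}/\tilde N\to\widehat{P}_{2n+1}/N$ is injective. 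Granting the identity $\widehat{P}_{2n+1}=P_{2n+1}\cdot N$, the map $\phi$ is also surjective, hence bijective, and composing with the surjection $\widehat{f}_{2n+1}\colon\widehat{P}_{2n+1}/N\twoheadrightarrow\St(C_n,\BZ)[2]$ recovers the isomorphism $f_{2n+1}$ of Proposition~\ref{prop-ker(f_{2n+1})}; by the two-out-of-three property, $\widehat{f}_{2n+1}$ induces an isomorphism on $\widehat{P}_{2n+1}/N$, forcing $K=N$.

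The identity $\widehat{P}_{2n+1}=P_{2n+1}\cdot N$ is the main obstacle. Since $\widehat{P}_{2n+1}$ is the normal closure in $\widehat{B}_{2n+1}$ of $\{\sigma_i^2\}_{i=0}^{2n}$, and the key congruence makes $\sigma_0^2$ equivalent modulo~$N$ to $W\in P_{2n+1}$, it suffices to verify that the image of $P_{2n+1}$ in $\widehat{B}_{2n+1}/N$ is normalised by $\sigma_0^{\pm 1}$, i.e., that $\sigma_0 p\sigma_0^{-1}\in P_{2n+1}\cdot N$ for every $p\in P_{2n+1}$. Testing on the Artin generators $A_{i,j}$ ($1\le i<j\le 2n+1$) of $P_{2n+1}$, the cases $j\le 4$ or $i\ge 5$ are trivial because $\sigma_0$ commutes with every $\sigma_k$ with $k\ne 4$; the remaining cases $i\le 4<5\le j$ are reduced via the factorisation $A_{i,j}=(\sigma_{j-1}\cdots\sigma_5)\,A_{i,5}\,(\sigma_{j-1}\cdots\sigma_5)^{-1}$ and the commutation of $\sigma_0$ with $\sigma_5,\ldots,\sigma_{2n}$ to showing $\sigma_0 A_{i,5}\sigma_0^{-1}\in P_5\cdot N$ for $1\le i\le 4$. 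For this final case I would invoke the isomorphism $j'\colon\widehat{B}_5\xrightarrow{\sim}B_6$ sending $\sigma_0\mapsto\sigma_5$ (discussed just before the theorem): it identifies $\sigma_0 A_{i,5}\sigma_0^{-1}$ with $A_{i,6}\in P_6$, and the proof of Theorem~\ref{thm-kernel}(a) for $n=2$ shows that $A_{i,6}$ is a $B_5$-conjugate of an element of $P_4\cdot\alpha_2^{\pm1}$; pulling back via~$j'$ and using $\alpha_0\in N$ then produces the required expression in $P_5\cdot N$, completing the argument.
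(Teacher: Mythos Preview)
Your argument is correct and arrives at the same conclusion as the paper, but by a genuinely different route. The paper's proof is very short because it imports a structural result from Digne--Gomi~\cite[Corollary~8]{DG}: the pure Artin group $\widehat P_{2n+1}$ decomposes as a semi-direct product of $P_{2n+1}$ with a normal complement generated by conjugates of~$\sigma_0^2$; combined with the observation $\alpha_0\in P_4\,\sigma_0^2$ (already used in the proof of Theorem~\ref{thm-kernel}), this immediately reduces the kernel in $\widehat P_{2n+1}$ to $\alpha_0$ together with $\Ker f_{2n+1}$, and Proposition~\ref{prop-ker(f_{2n+1})} finishes the job. You bypass the external reference entirely and instead prove the key identity $\widehat P_{2n+1}=P_{2n+1}\cdot N$ by hand: you verify that the image of $P_{2n+1}$ in $\widehat B_{2n+1}/N$ is stable under $\sigma_0$-conjugation by testing on the Artin generators~$A_{i,j}$, reducing the only nontrivial cases to $\sigma_0 A_{i,5}\sigma_0^{-1}$ ($1\le i\le 4$), and then transporting these via the isomorphism $j'\colon\widehat B_5\xrightarrow{\sim}B_6$ to the elements $A_{i,6}$, whose expression as $B_5$-conjugates of elements of $P_4\,\alpha_2^{\pm1}$ is exactly what the proof of Theorem~\ref{thm-kernel}(a) establishes (that argument being valid verbatim for $n=2$). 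The two-out-of-three step with $\phi\colon P_{2n+1}/\tilde N\to\widehat P_{2n+1}/N$ is clean. Your approach is longer but fully self-contained within the paper; the paper's is terser but leans on a nontrivial result about pure Artin groups.
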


\begin{proof}
It follows from Proposition~\ref{prop-ker(f_{2n+1})} that 
\[
\Ker(\widehat f_{2n+1})\cap P_{2n+1} = \Ker(f_{2n+1})\cap P_{2n+1}
\] 
is the normal closure 
of $[\sigma_3,\Delta_3^4]$ and $\Delta_5^4\Delta_3^{-16}$ in~$B_{2n+1}$.

To determine the kernel in~$\widehat P_{2n+1}$ we apply~\cite[Corollary~8]{DG} (and its proof). 
This result states that if $\Gamma$ is a Coxeter graph and $i$ a vertex of~$\Gamma$,
the pure Artin group of type $\Gamma$ is the semi-direct product of a normal subgroup 
generated by conjugates of the squares $\sigma_j^2$ with $j\in \Gamma$
and of the pure Artin group of type~$\Gamma\setminus \{i\}$. 

We apply this to $\Gamma=\Gamma_k$ and $i=0$. All generators $\sigma_j$
are conjugate in~$\widehat B_{2n+1}$; in particular their squares are conjugate to~$\sigma_0^2$.
Consequently, $\widehat P_{2n+1}$ is the semi-direct product of a normal subgroup
generated by elements conjugate to~$\sigma_0^2$ in~$\widehat B_{2n+1}$ and of~$P_{2n+1}$. 
Since $\alpha_0$ lies in $P_4 \sigma_0^2\subset P_{2n+1} \sigma_0^2$ 
(see the proof of Theorem~\ref{thm-kernel})
and $\alpha_0 \in \Ker\widehat f_{2n+1}$, we deduce the theorem.
\end{proof}

\begin{rem}
By Theorem~\ref{thm-St-newgen} the morphism $\widehat f_{2n+1}: \widehat{B}_{2n+1} \to \St(C_n,\BZ)$ is surjective.
Composing it with the natural surjections $\St(C_n,\BZ) \to \Sp_{2n}(\BZ) \to \Sp_{2n}(\BF_2)$, we obtain 
a surjective morphism $\widehat{B}_{2n+1} \to\Sp_{2n}(\BF_2)$.
The latter epimorphism factors through $W(\Gamma_{2n+1})\rightarrow \Sp_{2n}(\BF_2)$, 
where $W(\Gamma_{2n+1})$ is the Coxeter group associated with the graph~$\Gamma_{2n+1}$,
since the image of each generator~$\sigma_i$ has order~$2$ for $i=0,\ldots,2n$.
\end{rem}

\begin{rem}
We can say a little more about the kernel of $\widehat{f}_{2n+1}$ when $n=3$. 
In this case the Coxeter group $W(\Gamma_7)$ is of type~$E_7$ and its center has order~$2$. 
Let~$w_0$ be its non-trivial central element. Since the center of $\Sp_6(\BF_2)$ is trivial, 
the element~$w_0$ has to be in the kernel of the above surjective morphism
$W(\Gamma_7)\rightarrow \Sp_6(\BF_2)$. 
Since the order of $W(\Gamma_7)$ is twice
the order of $\Sp_6(\BF_2)$, this kernel is exactly $\{1,w_0\}$.
We deduce that any element of $\Ker \widehat f_7$ is either in the pure
Artin group $\widehat P_7$ of type $\Gamma_7$, or in the coset $\bw_0
\widehat P_7$, where $\bw_0$ is a fixed preimage of $w_0$ in $\widehat B_7$. 

Since we know $\Ker\widehat{f}_7 \cap \widehat P_7$ by Theorem~\ref{th-ker(hat f_{2n+1})}, 
it remains to determine the intersection of~$\Ker\widehat{f}_7$ with the coset $\bw_0 \widehat P_7$.
A computation shows that the image of $\bw_0$ in $\Sp_6(\BZ)$ is trivial.
Thus $\widehat {f}_7(\bw_0)=w_1^{4k} \in \St(C_3,\BZ)$ for some integer~$k$.
Consequently, $\bw_0 \Delta_3^{-4k}$ belongs to $\Ker \widehat{f}_7$
and the full kernel $\Ker \widehat{f}_7: \widehat{B}_7 \to \St(C_3,\BZ)$ is the normal closure of 
$\alpha_0$, $[\sigma_3,\Delta_3^4]$, $\Delta_5^4\Delta_3^{-16}$ and $\bw_0 \Delta_3^{-4k}$. 
We have not been able to determine the exponent~$k$.
\end{rem}

\section*{Acknowledgement}
We are grateful to Christian Blanchet for having drawn our attention to References \cite{Ar, BMP}
and helped us understand the connections with the hyperelliptic Torelli groups.
We also thank Tara Brendle for reading a preliminary draft and 
an anonymous referee for bringing the preprint~\cite{BPS} to our attention.
We are also very grateful to Benjamin Enriquez for his observation on the braid~$\alpha_2$,
which led to Remark~\ref{rem-BE}.


\end{document}